\newtheorem{prop}{Proposition}[section]
\newtheorem{coro}[prop]{Corollary}
\def\and{\quad{\rm and}\quad}
\def\<{\langle}
\def\>{\rangle}
\title[Interior $C^2$ estimate ]{Interior $C^2$ regularity of convex solutions to prescribing scalar curvature equations}
\author{Pengfei Guan
        \and
        Guohuan Qiu
        }
\address{Department of Mathematics and Statistics, McGill University, 805 Sherbrooke O, Montreal, Quebec, Canada, H3A 0B9}
\email{pengfei.guan@mcgill.ca}
\email{guohuan.qiu@mail.mcgill.ca}
\thanks{Research of the first author was supported in part by an NSERC Discovery Grant. }
\newtheorem{theorem}{Theorem}
\newtheorem{lemma}{Lemma}
\begin{document}
\begin{abstract}
We establish interior $C^2$ estimates for convex solutions of scalar curvature equation and $\sigma_2$-Hessian equation. We also prove interior curvature estimate for isometrically immersed hypersurfaces $(M^n,g)\subset \mathbb R^{n+1}$ with positive scalar curvature.  These estimates are consequences of an interior estimates for these equations obtained under a weakened condition.
\end{abstract}
\subjclass{35J60, 35B45}
\date{\today}
\maketitle

\section{Introduction}

Regularity estimates for immersed convex surfaces in $\mathbb R^3$ is the key to solutions of the Weyl problem by Nirenberg and Pogorelov \cite{N, Pb}.   A refined interior estimate for solutions to the Weyl problem by Heinz  \cite{H} reveals some special properties of solutions to Monge-Amp\`ere type equations in dimension $2$. This type of interior estimates are important for existence of isometric embedding of non-compact surfaces and for Liouville type theorems.
An interesting question in geometric analysis is if such purely local interior curvature estimates hold for isometrically immersed hypersurfaces in $\mathbb R^{n+1}$ for $n\ge 2$.  We provide an affirmative answer in this paper. The following is a generalization Heinz's interior estimate \cite{H} in higher dimensions for the isometrically embedded hypersurfaces.

\begin{theorem}\label{thm01} Suppose $(M^n, g)$ is an isometrically immersed hypersurface in $\mathbb R^{n+1}$ with positive scalar curvature $R_g$. Suppose $M^n$ is a $C^1$ graph over a ball $B_r\subset \mathbb R^n$ of radius $r$. Then there is constant $C$ depending only on $n, r, \|g\|_{C^4(B_r)}, \inf_{B_r}R_g, \|M\|_{C^1(B_r)}$ such that
\begin{equation}\label{curvest} \max_{B_{\frac{r}2}}|\kappa_i|\le C,\end{equation}
where $\kappa_1,\cdots,\kappa_n$ are the principal curvatures of $M$.
 \end{theorem}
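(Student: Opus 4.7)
The plan is to reduce Theorem \ref{thm01} to the paper's main interior $C^2$ estimate for the prescribed scalar curvature equation on graphs, which according to the abstract holds under a condition weaker than convexity of the solution. Writing $M$ locally as the graph $\{(x,u(x)) : x \in B_r\}$ with $u \in C^1$ and $|Du| \le \|M\|_{C^1(B_r)}$, the induced metric and second fundamental form (with respect to the upward unit normal) are
\begin{equation*}
g_{ij} = \delta_{ij} + u_i u_j, \qquad h_{ij} = \frac{u_{ij}}{\sqrt{1+|Du|^2}},
\end{equation*}
and the Gauss equation $R_g = 2\sigma_2(\kappa)$ becomes the fully nonlinear PDE
\begin{equation*}
\sigma_2(\kappa[u]) = \tfrac{1}{2} R_g(x),
\end{equation*}
where $\kappa[u]$ are the eigenvalues of the Weingarten map $g^{-1}h$. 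By hypothesis, $R_g \ge \inf_{B_r} R_g > 0$, and $\|R_g\|_{C^2(B_r)}$ is controlled by $\|g\|_{C^4(B_r)}$.

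With the PDE in hand, I would apply the paper's interior $C^2$ estimate to obtain a bound
\begin{equation*}
\max_{B_{r/2}} |D^2 u| \le C\bigl(n, r, \|g\|_{C^4(B_r)}, \inf_{B_r} R_g, \|M\|_{C^1(B_r)}\bigr).
\end{equation*}
Since the principal curvatures $\kappa_i$ are continuous functions of $(Du, D^2 u)$ and $|Du|$ is already controlled, the claimed bound \eqref{curvest} follows at once. The crucial input is precisely that the interior estimate requires only a weakened condition rather than full convexity of the solution; this is what allows Theorem \ref{thm01} to hold without assuming the hypersurface is convex.

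I expect the main obstacle to be twofold. First, proving the interior $C^2$ estimate under a condition strictly weaker than convexity is the technical core of the paper: classical Pogorelov-type auxiliary functions exploit convexity in an essential way, so circumventing this demands a carefully tailored test function and a new maximum-principle argument that can handle solutions whose principal curvature vector need not lie in the positive cone. Second, one must verify that an isometric immersion with $R_g > 0$ that is a $C^1$ graph actually satisfies the weakened admissibility condition along $u$. For $n = 2$ this is straightforward after orienting the normal, since $\{\sigma_2 > 0\}$ then collapses to a single connected component of principal curvature vectors; for $n \ge 3$ it is more subtle, as $\{\sigma_2 > 0\}$ is disconnected and one must argue that $\kappa[u]$ stays in the appropriate admissible component throughout $B_r$. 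The Gauss-equation reduction itself is routine; all the essential work is in these two points.
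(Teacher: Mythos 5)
Your reduction to the prescribed scalar curvature equation is the same first step the paper takes, and you are right that Theorem \ref{thm01} is meant to follow from the interior estimate proved under a condition weaker than convexity. But there is a genuine gap: you never identify that weakened condition, and you never verify it for the hypersurface at hand. The condition is the lower bound $\sigma_3(\kappa)\ge -A$ (hypothesis (\ref{saA}) of Theorem \ref{thm1s}), and essentially the whole content of deducing Theorem \ref{thm01} from Theorem \ref{thm1s} is the observation that this bound is \emph{automatic} for an isometric immersion: the Ricci curvature is intrinsic, hence controlled by $\|g\|_{C^4(B_r)}$, and by the Gauss equation $Ric_{jj}=\sigma_2^{jj}(\kappa)\,\kappa_j$ for each $j$. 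Combining this with Lemma \ref{lemma1}, which gives $\sigma_2^{jj}\ge c_2\sigma_1\ge c_2\kappa_1$ for $j\ge 2$, one obtains $|\kappa_j|\le C\kappa_1^{-1}$ for all $j\ge 2$, and hence $\sigma_3(\kappa)=\sum_{i<j<l}\kappa_i\kappa_j\kappa_l\ge-\tilde A$ with $\tilde A$ depending only on $n$, the Ricci bound and $\sigma_2(\kappa)$; this is exactly Corollary \ref{GQlemR}. Without this step your argument does not close: a hypersurface with positive scalar curvature need not be convex, and no a priori lower bound on $\sigma_3$ is available from extrinsic data alone, so the hypotheses of the interior estimate are not met.

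The second obstacle you flag --- that $\{\sigma_2>0\}$ is disconnected for $n\ge 3$ and one must land in the admissible component --- is not where the difficulty lies: $\{\sigma_2>0\}$ has exactly two components, $\Gamma_2$ and $-\Gamma_2$, and one passes between them by reversing the orientation of the normal, so admissibility is arranged by a choice of orientation and continuity. The first obstacle you flag (proving the interior estimate under the weakened hypothesis) is indeed the technical core of the paper, but it is the content of Theorems \ref{thm1} and \ref{thm1s} and may be quoted here; note also that the relevant estimate is the curvature version, Theorem \ref{thm1s}, run directly on the hypersurface with the test function built from $(X,\nu)$ and $(\nu,E_{n+1})$, not the Hessian bound $\max|\nabla^2 u|\le C$ of Theorem \ref{thm1}. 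What is specific to Theorem \ref{thm01} is precisely the intrinsic control of Ricci curvature described above, after which one repeats the proof of Theorem \ref{thm1s} with Corollary \ref{GQlemR} in place of Corollary \ref{GQlemW}.
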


 \medskip
 
The above result is related to a longstanding problem in fully nonlinear partial differential equations:
the interior $C^2$ estimate for solutions of the following prescribing scalar curvature equation  and $\sigma_2$-Hessian equation,
\begin{equation} \label{seq:maineq}
\sigma_2(\kappa_1(x),\cdots,\kappa_n(x))=f(X,\nu(x))>0,\quad X\in B_r\times \mathbb R \subset \mathbb R^{n+1} \end{equation}
and
\begin{equation}
\sigma_{2}(\nabla^{2}u(x))=f(x,u(x),\nabla u(x))>0, \quad x\in B_r\subset \mathbb R^n\label{eq:maineq}
\end{equation}where $\kappa_1, \cdots, \kappa_n$ are the principal curvatures and $\nu$ the normal of the given hypersurface as a gragh over a ball $B_r\subset \mathbb R^n$ respectively,  $\sigma_{k}$ the $k$-th elementary symmetric
function, $ 1\le k\le n$.

Equations (\ref{seq:maineq}) and (\ref{eq:maineq}) are special cases of $\sigma_{k}$-Hessian and curvature equations developed by Caffarealli-Nirenberg-Spruck in \cite{CNS3}, as an integrated part of fully nonlinear
PDE. A $C^2$ function $u$ is called an admissible solution to equation (\ref{eq:maineq}) if $u$ satisfies the equation and $\Delta u>0$. Similarly, if a graph $(x,u(x))$ is called an admissible solution to equation (\ref{seq:maineq}) if $u$ satisfies the equation with positive mean curvature.

The global regularity of solutions to Dirichlet boundary problem
was established in \cite{CNS3}. When $n=2$, these equation is the Monge-Amp\`ere
equation type. Interior $C^2$ estimate was proved by Heinz \cite{H}. Such interior estimate fails for general convex solutions of Monge-Amp\`ere equation in higher dimensions,
\[
\sigma_{n}(\nabla^{2}u(x))=1,\quad x\in B_{1}\subset\mathbb{R}^{n},
\]
when $n\ge3$,   there are counter-examples constructed by Pogorelov in \cite{P}. Pogorelov's counter-examples were extended by Urbas \cite{U} to general $\sigma_{k}$ Hessian and curvature equations
\[\sigma_k(\kappa_1,\cdots,\kappa_n)=f,\quad
\sigma_{k}(\nabla^{2}u)=f,  \quad \quad k\ge3.\]
Whether interior $C^{2}$ estimates for solutions of equations \eqref{seq:maineq} and \eqref{eq:maineq} when $n\ge 3$ reminds open in general.
A major progress was achieved by Warren-Yuan \cite{WY}, they obtained
$C^{2}$ interior estimate in the case $n=3$ of equation
\begin{equation}\label{sigma21}
\sigma_{2}(\nabla^{2}u)=1,\quad x\in B_{1}\subset \mathbb R^n.
\end{equation}
More recently, the interior $C^2$ estimate for {\it semi-convex} solutions of equation (\ref{sigma21}) is obtained by McGonagle-Song-Yuan \cite{MSY}

\medskip

We establish interior estimates for {\it convex} solutions to equations (\ref{seq:maineq}) and (\ref{eq:maineq}) in higher dimensions. 

\begin{theorem}\label{thm1sv} Suppose $M$ is a convex graph over $B_r\subset \mathbb R^n$ and it is
a solution of equation \eqref{seq:maineq}, then
\begin{equation}
\max_{x\in B_{\frac{r}{2}}}|\kappa_i(x)|\leq C,\label{sestHc}
\end{equation}
where constant $C$ depending only on $n$, $r$, $\|M\|_{C^{1}(B_r)}$,
$\|f\|_{C^{2}(B_r)}$ and $\|\frac{1}{f}\|_{L^{\infty}}(B_r)$.  \end{theorem}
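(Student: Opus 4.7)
The plan is to carry out a Pogorelov-type test function argument adapted to convex graph hypersurfaces. After parameterizing $M$ as the graph $X(x)=(x,u(x))$ with $u$ convex on $B_r$, the principal curvatures coincide with the eigenvalues of the second fundamental form $h_{ij}$ computed from $u$ through the graph relation, and are all positive. The linearized operator $L=F^{ij}\nabla_i\nabla_j$, with $F^{ij}=\sigma_1\delta_{ij}-h_{ij}$, is then elliptic precisely because $u$ is convex. I would form a test function of the shape
\[
P(x)\;=\;\log\lambda_1(x)\;+\;\varphi\bigl(u,|\nabla u|^2\bigr)\;+\;\alpha\log\bigl(r^2-|x|^2\bigr),
\]
where $\lambda_1$ is the largest principal curvature, the logarithm of $r^2-|x|^2$ provides an interior cutoff, and $\varphi$ is an auxiliary function tuned to absorb terms coming from the $X$- and $\nu$-dependence of $f$. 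The constants in $\varphi$ and $\alpha$ would be fixed at the end.

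Supposing $P$ attains its interior maximum at some $x_0\in B_r$, I would diagonalize $h_{ij}$ at $x_0$ (after the usual perturbation to break eigenvalue multiplicity) and apply $\nabla P(x_0)=0$ together with $L(P)(x_0)\le 0$. Using the Codazzi identity (available since the ambient space is Euclidean) and differentiating $\sigma_2(h)=f(X,\nu)$ twice along $e_1$ produces two third-order terms in $L(\log\lambda_1)$: a \emph{good} positivity $-F^{pq,rs}h_{pq,1}h_{rs,1}\ge 0$ arising from concavity of $\sigma_2^{1/2}$ on the positive cone (which uses convexity crucially), and a \emph{bad} eigenvalue-gap term $\sum_{i>1}\tfrac{2F^{ii}h_{1i,1}^{\,2}}{\lambda_1-\kappa_i}$. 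Feeding in the critical equation $h_{11,i}/\lambda_1=-\varphi_i-\alpha\eta_i/\eta$ replaces the first-order derivatives of $h_{11}$ by manageable quantities, while the $X$- and $\nu$-derivatives of $f$ contribute at most terms linear in $\lambda_1$, which should be beaten by the quadratic quantity $\sum_i F^{ii}\kappa_i^2$ coming from the Ricci commutator.

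The essential algebraic mechanism, and the main obstacle, is the cancellation of the bad eigenvalue-gap term against the good concavity term. This is exactly the step that fails for $\sigma_k=f$ with $k\ge 3$, as witnessed by the Pogorelov--Urbas counterexamples, so any successful argument must use features specific to $k=2$ combined with convexity. I would exploit the bound $0<\lambda_1-\kappa_i<\lambda_1$ together with a sharper inequality controlling $\sum_{i,j}F^{ii}h_{ij,1}^{\,2}$ by $|\nabla\sigma_2|^2/\lambda_1$-type quantities, almost certainly splitting into two regimes according to whether $\kappa_i$ is comparable to $\lambda_1$ or much smaller. Finally, the cutoff contribution $\alpha L(\log\eta)$ behaves like $-\alpha F^{ii}/\eta$ near $\partial B_r$; this is balanced by rewriting the test function as $P=\eta^{\beta}\lambda_1 e^{\varphi}$ with appropriate $\beta>0$, so the conclusion takes the scale-invariant form $\eta(x)^{\beta}\lambda_1(x)\le C$ on $B_r$, yielding \eqref{sestHc} on $B_{r/2}$ after absorbing all constants depending only on the data listed in the theorem.
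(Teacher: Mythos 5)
Your overall strategy (maximum principle applied to a test function of the form $\log\lambda_1+{}$cutoff${}+{}$auxiliary terms, diagonalize at the maximum, use the differentiated equation and the commutator identities) is indeed the paper's broad approach, but the proposal has a genuine gap at exactly the point you yourself flag as ``the essential algebraic mechanism, and the main obstacle'': you never resolve the cancellation between the third--order terms, you only say you would ``almost certainly'' split into regimes and find ``a sharper inequality.'' That missing inequality is the whole content of the theorem. The paper supplies it through Lemma \ref{GQlem} and Corollary \ref{GQlemW}: convexity gives $\sigma_3(h)\ge 0$, which forces $|\kappa_j|\le C\kappa_1^{-1/2}$ for $j\ge 2$ and, crucially, the quantitative bound $\sigma_2^{11}\kappa_1\le\frac75\sigma_2$. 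Combined with Lemma \ref{chenlem} (Chen's refinement of the concavity inequality, giving $-\sum_{i\ne j}h_{ii1}h_{jj1}\ge\frac{3f}{2\kappa_1^{2}}h_{111}^2-C\kappa_1^2$), the coefficient $\frac{3f}{2\kappa_1}$ strictly dominates $\sigma_2^{11}$, which is what kills the bad term $\sigma_2^{11}h_{111}^2/\kappa_1^2$; the off-diagonal Codazzi terms $2\sum_{i\ge2}h_{11i}^2/\kappa_1$ handle the remaining $i\ge2$ contributions because $\sigma_2^{ii}\approx\kappa_1<2\kappa_1$. Without this input your good and bad terms have comparable size and the argument does not close --- this is precisely why the estimate is open for general admissible (non-convex) solutions. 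A related factual slip: for $F=\sigma_2$ the quantity $-F^{pq,rs}h_{pq,1}h_{rs,1}=\sum_{i\ne j}h_{ij1}^2-\sum_{i\ne j}h_{ii1}h_{jj1}$ is \emph{not} nonnegative (test $h_{pq,1}=\delta_{pq}$); concavity of $\sigma_2^{1/2}$ only gives a lower bound by $-(\sigma_2^{pq}h_{pq,1})^2/2\sigma_2$, which is what Lemma \ref{chenlem} sharpens.

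Two further points. First, your cutoff $\alpha\log(r^2-|x|^2)$ produces a term $-\alpha F^{ii}\eta_i^2/\eta^2$ that blows up like $\eta^{-2}$ near $\partial B_r$ and is not dominated by anything in your test function; the Pogorelov rewriting $\eta^\beta\lambda_1 e^\varphi$ does not remove it. The paper controls the analogous term $-8F^{ii}\rho_i^2/\rho^2$ only through the three-case analysis at the maximum point, using the critical point equation to convert $\rho_i/\rho$ into derivatives of the specially chosen terms $-\beta(X,\nu)/(\nu,E_{n+1})$ and $\alpha(\nu,E_{n+1})^{-2}$, which then generate a large positive term $\sim\alpha^2 b\,\sigma_1$ via Lemma \ref{lemma1}; your $\varphi(u,|\nabla u|^2)$ is not set up to do this. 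Second, note that the paper does not prove Theorem \ref{thm1sv} directly: it proves Theorem \ref{thm1s} under the weaker hypothesis $\sigma_3\ge-A$ and then observes that convexity makes $A=0$ admissible, so the convex case is an immediate corollary. If you intend a direct proof for convex solutions you still need the consequences of convexity recorded in Corollary \ref{GQlemW}; convexity enters not through concavity of the operator but through the pinching $\kappa_j\lesssim\kappa_1^{-1/2}$ and the ratio bound $\sigma_2^{11}\kappa_1\le\frac75\sigma_2$.
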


The similar interior $C^2$ estimate also holds for solutions of $\sigma_2$-Hessian equation (\ref{eq:maineq}).
\begin{theorem}\label{thm1v} Suppose $u\in C^{4}(\bar{B}_{1})$ is
a convex solution of equation \eqref{eq:maineq}, then
\begin{equation}
\max_{x\in B_{\frac{1}{2}}}|\nabla^{2}u(x)|\leq C,\label{estHc}
\end{equation}
where constant $C$ depending only on $n$, $\|u\|_{C^{1}}(B_1)$,
$\|f\|_{C^{2}}(B_1)$ and $\|\frac{1}{f}\|_{L^{\infty}}(B_1)$.  \end{theorem}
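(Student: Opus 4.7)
The plan is a maximum principle argument applied to a test function of the form
\[\Phi(x)=\eta(x)^{2}\,\lambda_{1}(\nabla^{2}u(x))\,\exp\bigl(\psi(|\nabla u|^{2})+Au\bigr),\]
where $\eta\in C_{c}^{\infty}(B_{1})$ is a cutoff with $\eta\equiv 1$ on $B_{1/2}$, $\lambda_{1}$ denotes the largest eigenvalue of $\nabla^{2}u$, and $A$ and $\psi$ are constants to be chosen large. Bounding $\Phi$ at its interior maximum immediately yields (\ref{estHc}). Let $x_{0}$ be a maximum point of $\Phi$ inside $\{\eta>0\}$ and rotate so that $\nabla^{2}u(x_{0})=\mathrm{diag}(\lambda_{1},\ldots,\lambda_{n})$ with $\lambda_{1}\ge\cdots\ge\lambda_{n}\ge 0$ (convexity). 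The linearized operator at $x_{0}$ is $L=\sum_{i}F^{ii}\partial_{ii}$ with $F^{ii}=\sigma_{1}-\lambda_{i}=\sum_{k\neq i}\lambda_{k}\ge 0$.

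From $\nabla\log\Phi(x_{0})=0$ and $L\log\Phi(x_{0})\le 0$, I would expand $L\log\lambda_{1}$ via the standard perturbation formulas for a principal eigenvalue, then replace the fourth-order sum $\sum_{i}F^{ii}u_{11ii}$ using the twice-differentiated equation $\sigma_{2}(\nabla^{2}u)=f$. This produces a favorable quadratic form $-F^{ij,kl}u_{ij1}u_{kl1}\ge 0$ from the concavity of $\sigma_{2}^{1/2}$ on the admissible cone, a positive off-diagonal contribution $\sum_{i,\,k\ge 2}2F^{ii}u_{1ki}^{2}/(\lambda_{1}-\lambda_{k})$, and a dangerous negative ``bad'' term $-\sum_{i}F^{ii}u_{11i}^{2}/\lambda_{1}^{2}$. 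The once-differentiated equation $\sum_{i}F^{ii}u_{ii1}=f_{,1}+\ldots$ supplies an additional identity for the cubic terms.

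The key role of convexity is to control those bad cubic terms. Since $\sigma_{2}=f$ is bounded from above and $\lambda_{i}\ge 0$, largeness of $\lambda_{1}$ forces $\lambda_{2}+\cdots+\lambda_{n}=O(1/\lambda_{1})$, so $F^{11}$ is small while $F^{ii}$ is comparable to $\lambda_{1}$ for $i\ge 2$. This asymmetry allows the positive off-diagonal terms to dominate $F^{11}u_{111}^{2}/\lambda_{1}^{2}$ after Cauchy--Schwarz is applied to the differentiated equation. The contributions from $\psi$, $\eta$ and the dependence of $f$ on $(u,\nabla u)$---first-order terms, cutoff derivatives, and lower order junk---are absorbed by the coercive contribution $L(Au)=A\sum_{i}F^{ii}\lambda_{i}=2Af$, for $A$ chosen large depending on $n$, $\|f\|_{C^{2}}$, $\inf f^{-1}$ and $\|u\|_{C^{1}}$.

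The principal obstacle is precisely the bad term $F^{11}u_{111}^{2}/\lambda_{1}^{2}$. Without convexity it cannot be absorbed, and the Pogorelov--Urbas counter-examples for $\sigma_{k}$ with $k\ge 3$ show that such interior estimates genuinely fail in that generality. In the $\sigma_{2}$ case the quadratic structure of the equation combined with $\lambda_{i}\ge 0$ should make the competition between the bad cubic term and the positive cubic together with the concavity terms close, but verifying this delicate inequality is the crux of the proof, in the spirit of Warren--Yuan and McGonagle--Song--Yuan.
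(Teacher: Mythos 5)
Your overall strategy (a Pogorelov-type test function, maximum principle, differentiating the equation twice, and using convexity to force $\lambda_2+\cdots+\lambda_n=O(1/\lambda_1)$, hence $F^{11}=O(1/\lambda_1)$) is in the same family as the paper's, which deduces Theorem \ref{thm1v} from Theorem \ref{thm1} since $\sigma_3(\nabla^2u)\ge 0$ for convex $u$. But there is a genuine gap exactly at the point you yourself flag as ``the crux'': the mechanism you name for absorbing the bad term $F^{11}u_{111}^2/\lambda_1^2$ does not work as described. The positive off-diagonal contribution coming from $\sigma_2^{ij}u_{ij11}$ is $\sum_{i\ne k}u_{ik1}^2\supset 2\sum_{k\ge2}u_{11k}^2$, which contains no $u_{111}^2$ at all, so it cannot dominate $F^{11}u_{111}^2/\lambda_1^2$ however small $F^{11}$ is. Likewise, the concavity of $\sigma_2^{1/2}$, which you invoke for $-\sigma_2^{ij,kl}u_{ij1}u_{kl1}$, yields only the lower bound $-\tilde f_1^{\,2}/(2\sigma_2)\ge -C\lambda_1^2$; it discards precisely the positive multiple of $u_{111}^2$ that is needed. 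What the paper actually uses is a refined concavity inequality (Lemma \ref{chenlem}, due to Chen): with the once-differentiated equation as a constraint, $-\sum_{i\ne j}u_{ii1}u_{jj1}\ge \frac{3}{2}f\,u_{111}^2/u_{11}^2-Cu_{11}^2$, and this beats $\sigma_2^{11}u_{111}^2/u_{11}^2$ only because convexity (through Lemma \ref{GQlem} and Corollary \ref{GQlemW}) gives the sharp bound $\sigma_2^{11}u_{11}\le\frac{7}{5}\sigma_2<\frac{3}{2}\sigma_2$; for general admissible $W$ one only has $\sigma_2^{11}W_{11}\le 2\sigma_2$ and the comparison fails. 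Without this refined inequality, or an equivalent, your scheme has nothing to set against the bad term.

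A second, smaller but still real, gap concerns absorption of the remaining first-derivative terms. Even after the $u_{111}^2$ competition is resolved, one is left with $-\sum_iF^{ii}u_{11i}^2/\lambda_1^2$ against a surplus that is only a small fraction of the same quantity, plus the cutoff and gradient contributions. These cannot all be absorbed by $L(Au)=2Af=O(1)$, since some leftovers are of order $\lambda_1$, and the coercive term $\psi''F^{ii}u_i^2\lambda_i^2$ helps only where $\nabla u$ does not degenerate. The paper handles this with the specific combination $2\log\rho+\alpha|\nabla u|^2/2+\beta(x\cdot\nabla u-u)+\log\log u_{11}$ (the $\log\log$ is what lets the small surplus from Chen's lemma dominate the $-b_i^2/b^2$ term) together with a three-case analysis on the location of the maximum point and a careful choice of $\beta$ relative to $\alpha$. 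Your exponential ansatz would need an analogous device; as written, the proposal stops where the proof has to begin.
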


In the case
of $n=2$, the above results were proved by Heinz \cite{H} (see also \cite{CHO}). One observes that equations \eqref{eq:maineq} and \eqref{seq:maineq} are Monge-Amp\`ere type equation, all admissible solutions are automatically convex. If $n>2$, admissible solutions of these equations in general are not convex. Under additional assumption that $\sigma_3>-A$ for some constant $A\ge 0$, we will obtain $C^2$ interior estimates for solutions of equations \eqref{eq:maineq} and \eqref{seq:maineq} in Theorem \ref{thm1s} and Theorem \ref{thm1}. Theorem \ref{thm1sv} and Theorem \ref{thm1v} will be the direct consequence of these theorems respectively.  We note that interior $C^2$ estimates for equations (\ref{eq:maineq}) and (\ref{seq:maineq}) trivially yield $\sigma_3>-A$. It is an open question if this assumption is redundant. 
This assumption is {\it not needed} in Theorem \ref{thm01}  due to the control of Ricci curvature from intrinsic geometry. In dimension three case, the second name author can also prove Theorem \ref{thm1v} without this assumption in [13].

\bigskip

The organization of the paper as follow. We collect and prove some facts associated to the second elementary symmetric function in section 2. Theorem \ref{thm1} will be proved in section 3, and Theorem \ref{thm1s} will be proved in section 4. The main theorems in the Introduction follow from these theorems.

\section{Preliminaries}

Let $W=(W_{ij})$ be a symmetric tensor, we say $W\in \Gamma_2$ if $\sigma_1(W)>0, \sigma_2(W)>0$.
Suppose $u\in C^2(B_1)$, $u$ is called an admissible solution of equation (\ref{eq:maineq}) if $\nabla^2 u(x)\in \Gamma_2, \forall x\in B_1$. Likewise, if $h$ is the second fundamental form of the hypersurface $M$, we say $M$ is an admissible solution of equation (\ref{seq:maineq}) if $h(x)\in \Gamma_2, \forall x\in M$.


It follows from \cite{CNS3}, if $W\in \Gamma_2$, then $\sigma_2^{ij}(W)=\frac{\partial \sigma_2}{\partial W_{ij}}(W)$ is positive definite. We list some known facts regarding structure of $\sigma_2$. The first lemma is from \cite{LinTru}.
\begin{lemma}\label{lemma1} Suppose  $W\in {\Gamma}_{2}$ is diagonal and $W_{11}\ge\cdots\ge W_{nn}$, then there exist $c_{1}>0$ and $c_{2} >0$ depending
only on $n$ such that
\begin{equation}\label{tildec0}
\sigma_{2}^{11}(W)W_{11}\geq c_{1}\sigma_2(W),
\end{equation}
and for any $j\geq2$
\begin{equation}\label{s222}
\sigma_{2}^{jj}(W)\geq c_{2}\sigma_{1}(W).
\end{equation}
\end{lemma}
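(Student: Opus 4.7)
The plan is to expand $\sigma_2(W)$ around $W_{11}$ using the identity
$$\sigma_2(W) \;=\; W_{11}\,\sigma_1(W') + \sigma_2(W'),$$
where $W' = (W_{ij})_{i,j\geq 2}$ and $\sigma_1(W') = \sigma_2^{11}(W)$, combined with the purely algebraic bound
$$\sigma_2(W') \;\leq\; \tfrac{n-2}{2(n-1)}\,\sigma_1(W')^2,$$
which follows from Cauchy--Schwarz applied to $W_{22},\ldots,W_{nn}$ and holds regardless of the sign of $\sigma_1(W')$.

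For inequality (\ref{tildec0}), set $s := \sigma_1(W') = W_{22}+\cdots+W_{nn}$. The first step is to verify $s>0$. If $s\leq 0$, the two displays above give $\sigma_2(W) \leq s\bigl(W_{11} + \tfrac{n-2}{2(n-1)}s\bigr)$; the assumption $\sigma_1>0$ forces $W_{11}>-s=|s|$, and since $\tfrac{n-2}{2(n-1)}<1$ the second factor is strictly positive while $s\leq 0$, which would contradict $\sigma_2>0$. Once $s>0$ is known, the ordering $W_{11}\geq W_{jj}$ for $j\geq 2$ gives $s \leq (n-1)W_{11}$, so $\sigma_2(W') \leq \tfrac{n-2}{2}W_{11}\,s$, and hence $\sigma_2(W) \leq \tfrac{n}{2}\,W_{11}\,\sigma_2^{11}(W)$, proving (\ref{tildec0}) with $c_1=2/n$.

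For inequality (\ref{s222}), the monotonicity $\sigma_2^{jj}(W)=\sigma_1-W_{jj}\geq\sigma_1-W_{22}=\sigma_2^{22}(W)$ for $j\geq 2$ reduces the problem to bounding $W_{22}$ from above by $(1-c_2)\sigma_1$. I would split on the sign of $T := W_{33}+\cdots+W_{nn}$. If $T\geq 0$, then $\sigma_1 = W_{11}+W_{22}+T \geq 2W_{22}$ (using $W_{11}\geq W_{22}$), so $c_2 = 1/2$ works. If $T<0$, I expand
$$\sigma_2(W) \;=\; W_{11}W_{22} + (W_{11}+W_{22})T + \sigma_2(W_{33},\ldots,W_{nn}),$$
bound $W_{11}W_{22}\leq\tfrac{1}{4}(W_{11}+W_{22})^2$ by AM--GM and $\sigma_2(W_{33},\ldots,W_{nn})\leq\tfrac{n-3}{2(n-2)}T^2$ by Cauchy--Schwarz, then substitute $u:=W_{11}+W_{22}$ and $T=\sigma_1-u$. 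The resulting inequality $\sigma_2(W)>0$ becomes a quadratic in $u$ with negative leading coefficient $-n/(4(n-2))$, forcing $u\leq\tfrac{\sigma_1}{n}\bigl(2+\sqrt{2(n-1)(n-2)}\bigr)$. Since $W_{22}\leq u/2$, this yields (\ref{s222}) with $c_2 = \bigl(n-1-\sqrt{(n-1)(n-2)/2}\bigr)/n$, positive because $2(n-1)>n-2$. The main point that requires care is the bookkeeping in this quadratic reduction; the remaining steps are either direct expansions or standard applications of Cauchy--Schwarz and AM--GM.
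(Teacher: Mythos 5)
Your proof is correct. Note that the paper does not prove this lemma at all --- it is quoted from Lin--Trudinger \cite{LinTru} --- so there is no in-paper argument to compare against; your proposal supplies a self-contained elementary verification. I checked the two computations that carry the weight: for \eqref{tildec0}, the sign argument for $s=\sigma_2^{11}(W)$ and the chain $\sigma_2(W')\le\tfrac{n-2}{2(n-1)}s^2\le\tfrac{n-2}{2}W_{11}s$ are valid (the latter using $0<s\le(n-1)W_{11}$), giving $c_1=2/n$, which is sharp at $W=I$. For \eqref{s222}, the quadratic in $u=W_{11}+W_{22}$ indeed has leading coefficient $\tfrac14-1+\tfrac{n-3}{2(n-2)}=-\tfrac{n}{4(n-2)}$ and middle coefficient $\tfrac{\sigma_1}{n-2}$, its larger root is $\tfrac{\sigma_1}{n}\bigl(2+\sqrt{2(n-1)(n-2)}\bigr)$, and $W_{22}\le u/2$ then yields $c_2=\bigl(n-1-\sqrt{(n-1)(n-2)/2}\bigr)/n>0$; combining with the $T\ge0$ case one takes the minimum of the two constants. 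Two cosmetic remarks: the $T<0$ hypothesis is never actually used in the second case (the quadratic bound holds for either sign of $T$, so the case split is dispensable), and for completeness you should record that $\sigma_1(W)>0$, which is part of the definition of $\Gamma_2$ in this paper, is what makes \eqref{s222} a genuine positive lower bound.
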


The following lemma can be found in \cite{chen2013optimal}.
\begin{lemma}\label{chenlem} Under the same assumption as in Lemma \ref{lemma1},  if $\xi_{ij}$ is symmetric  and
\begin{eqnarray*}
\sum_{i=2}^{n}\sigma_{2}^{ii}\xi_{ii}+\sigma_{2}^{ii}\xi_{11}=\eta,
\end{eqnarray*}
then
\begin{equation}\label{chen}
-\sum_{i\neq j}\xi_{ii}\xi_{jj}\geq\frac{1}{2\sigma_{2}(W)}\frac{(n-1)[2\sigma_{2}(W)\xi_{11}-W_{11}\eta]^{2}}{[(n-1)W_{11}^{2}
+2(n-2)\sigma_{2}(W)]}-\frac{\eta^{2}}{2\sigma_{2}(W)}.
\end{equation}
\end{lemma}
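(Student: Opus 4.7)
The plan is to view the left-hand side
$-\sum_{i\neq j}\xi_{ii}\xi_{jj}=\sum_i\xi_{ii}^{2}-(\sum_i\xi_{ii})^{2}$
as a quadratic function of $(\xi_{22},\ldots,\xi_{nn})$ with $\xi_{11}$ held as a parameter, and to minimize it subject to the single linear constraint $\sum_{i=1}^{n}\sigma_{2}^{ii}\xi_{ii}=\eta$. My claim is that the Lagrange critical point is a strict global minimum and that its value equals exactly the right-hand side of \eqref{chen}, so \eqref{chen} holds with equality at the critical point and strictly elsewhere.

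To see the critical point is a global minimum, note that the ambient Hessian of $F:=\sum_i\xi_{ii}^{2}-(\sum_i\xi_{ii})^{2}$ in $\mathbb R^{n-1}$ equals $2(I-\mathbf{1}\mathbf{1}^{T})$, whose only negative eigendirection is $\mathbf{1}=(1,\ldots,1)$. Since every $\sigma_{2}^{kk}(W)>0$ on $\Gamma_{2}$, the vector $\mathbf{1}$ does not lie in the constraint tangent space $\{v:\sum_{k\ge 2}\sigma_{2}^{kk}v_{k}=0\}$, and Cauchy-Schwarz $(\sum v_{k})^{2}\le (n-1)\sum v_{k}^{2}$ holds with equality only when $v\propto\mathbf{1}$; hence the restricted Hessian is strictly positive definite. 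Setting $\partial_{\xi_{kk}}F=\lambda\sigma_{2}^{kk}$ for $k\ge 2$ yields $\xi_{kk}=s+\frac{\lambda}{2}\sigma_{2}^{kk}$ where $s:=\sum_i\xi_{ii}$; combining this ansatz with the definition of $s$ and the constraint determines both $s$ and $\lambda$ as explicit linear functions of $\xi_{11}$ and $\eta$.

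To identify the minimum value with the stated right-hand side I would use $\sigma_{2}^{ii}=\sigma_{1}(W)-W_{ii}$ together with $|W|^{2}=\sigma_{1}^{2}-2\sigma_{2}$ to compute the two sums $A:=\sum_{k\ge 2}\sigma_{2}^{kk}=(n-2)\sigma_{1}+W_{11}$ and $B:=\sum_{k\ge 2}(\sigma_{2}^{kk})^{2}$, and then derive the two key algebraic identities $A-(n-2)\sigma_{2}^{11}=(n-1)W_{11}$ and $(n-2)B-A^{2}=-[(n-1)W_{11}^{2}+2(n-2)\sigma_{2}]$; the second is precisely the denominator appearing in \eqref{chen}. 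Back-substituting the Lagrange ansatz gives $F=\xi_{11}^{2}+(n-2)s^{2}+\lambda sA+\tfrac{\lambda^{2}}{4}B$ at the critical point, and a direct expansion using the above identities reduces this to $\frac{(n-1)(2\sigma_{2}\xi_{11}-W_{11}\eta)^{2}}{2\sigma_{2}[(n-1)W_{11}^{2}+2(n-2)\sigma_{2}]}-\frac{\eta^{2}}{2\sigma_{2}}$, matching \eqref{chen} exactly. The main obstacle is this final algebraic identification: the computation is routine but bookkeeping-heavy, with delicate cancellations among the factors $n-1$, $n-2$, $W_{11}$ and $\sigma_{2}$. The degenerate case $n=2$ must be treated separately, since then the constraint alone determines $\xi_{22}$ and a direct substitution gives \eqref{chen} with equality.
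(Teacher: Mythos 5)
The paper does not prove this lemma; it is quoted from Chen's paper \cite{chen2013optimal}, so there is no internal proof to compare against. Your strategy --- minimize $-\sum_{i\neq j}\xi_{ii}\xi_{jj}=\sum_i\xi_{ii}^2-(\sum_i\xi_{ii})^2$ over $(\xi_{22},\dots,\xi_{nn})$ subject to the linear constraint and show the minimum equals the right-hand side --- is sound, and I have checked your two key identities ($A-(n-2)\sigma_2^{11}=(n-1)W_{11}$ and $(n-2)B-A^2=-[(n-1)W_{11}^2+2(n-2)\sigma_2]$) as well as the final value: with $D=(n-1)W_{11}^2+2(n-2)\sigma_2$ one gets $F=\frac{(n-1)\xi_{11}^2}{n-2}-\frac{[(n-2)\eta+(n-1)W_{11}\xi_{11}]^2}{(n-2)D}$ at the critical point, and this does simplify to the stated right-hand side. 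The $n=2$ case is correctly isolated.

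There is, however, one genuine flaw: your justification that the restricted Hessian is positive definite. The inference ``the negative eigendirection $\mathbf 1$ of $2(I-\mathbf 1\mathbf 1^{T})$ does not lie in the constraint hyperplane, hence the restriction is positive definite'' is false for a Lorentzian form: a hyperplane can avoid the negative eigenvector and still meet the negative cone. Concretely, positive semidefiniteness of $v\mapsto \sum_{k\ge2}v_k^2-(\sum_{k\ge2}v_k)^2$ on $\{\sum_{k\ge2}\sigma_2^{kk}v_k=0\}$ is \emph{equivalent} to $A^2\ge (n-2)B$ (minimize $\sum v_k^2$ subject to $\sum v_k=t$ and $\sum\sigma_2^{kk}v_k=0$), and positivity of the coefficients $\sigma_2^{kk}$ alone does not give this when $n\ge4$: for weights $(1,\epsilon,\epsilon)$ with $n=4$ one has $(\sum w_k)^2\approx1<2\approx(n-2)\sum w_k^2$. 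Without positive definiteness the constrained infimum would be $-\infty$ and the Lagrange point a saddle, so this step cannot be waved through. The fix is already in your own computation: your identity gives $A^2-(n-2)B=(n-1)W_{11}^2+2(n-2)\sigma_2(W)>0$ since $W\in\Gamma_2$, which is exactly the required criterion. Replace the eigendirection argument by this observation and the proof is complete.
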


\begin{lemma}\label{GQlem} Under the same assumption as in Lemma \ref{lemma1}, and in addition that there exist a positive
constant $a\le \sqrt{\frac{\sigma_2(W)}{3(n-1)(n-2)}}$ (if $n=2$, $a>0$ could be arbitrary), such that
\begin{equation}\label{assumption}
\sigma_{3}(W+aI)(x_{0})\geq0,
\end{equation}
then \begin{equation}\label{acon}
\frac76\sigma_2(W)\geq(f+\frac{(n-1)(n-2)}{2}a^{2})\geq\frac56\sigma_{2}^{11}(W)W_{11},
\end{equation}
provided that $W_{11}> 6(n-2)a$.
Furthermore, for any $j\in \{2,\cdots n\}$,
\begin{equation}\label{eq:assum}
|W_{jj}|\leq(n-1)^{2}a
+\frac{7(n-1)\sigma_2(W)}{5W_{11}}.
\end{equation}
\end{lemma}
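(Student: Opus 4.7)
The plan has three parts. First, the inequality $\tfrac{7}{6}\sigma_2(W)\ge\sigma_2(W)+\tfrac{(n-1)(n-2)}{2}a^2$ on the left of \eqref{acon} rearranges to $3(n-1)(n-2)a^2\le\sigma_2(W)$, which is precisely the hypothesis on $a$; this is immediate.

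The technical core is the claim $\sigma_2(\mu+aI_{n-1})\ge 0$, where $\mu=(W_{22},\dots,W_{nn})$ and $I_{n-1}$ denotes the $(n-1)\times(n-1)$ identity. Set $S_k=\sigma_k(\mu)$; then $W\in\Gamma_2$ combined with G\aa rding's theorem yields $\sigma_2^{11}(W)=S_1>0$. Isolating the first coordinate in the hypothesis $\sigma_3(W+aI)\ge 0$ gives
$$(W_{11}+a)\,\sigma_2(\mu+aI_{n-1})+\sigma_3(\mu+aI_{n-1})\ge 0. \qquad(\star)$$
For $n\le 3$, $\sigma_3(\mu+aI_{n-1})\equiv 0$ because the argument has too few variables, so $(\star)$ directly yields the claim. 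For $n\ge 4$, I argue by contradiction: if $\sigma_2(\mu+aI_{n-1})<0$, then $(\star)$ forces $\sigma_3(\mu+aI_{n-1})\ge(W_{11}+a)|\sigma_2(\mu+aI_{n-1})|$. Newton's inequality for the $n-1$ real variables gives
$$\sigma_1(\mu+aI_{n-1})\,\sigma_3(\mu+aI_{n-1})\le \tfrac{2(n-3)}{3(n-2)}\,\sigma_2(\mu+aI_{n-1})^2,$$
and, using $\sigma_1(\mu+aI_{n-1})\ge S_1>0$, this forces $|\sigma_2(\mu+aI_{n-1})|\ge\tfrac{3(n-2)}{2(n-3)}W_{11}S_1$. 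But the identity $\sigma_2(\mu+aI_{n-1})=\sigma_2(W)-W_{11}S_1+(n-2)aS_1+\binom{n-1}{2}a^2$ together with the case assumption and $\sigma_2(W)>0$ forces $|\sigma_2(\mu+aI_{n-1})|<W_{11}S_1$, giving $\tfrac{3(n-2)}{2(n-3)}\le 1$, impossible when $n\ge 4$.

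Given the key claim, the remaining assertions follow by direct manipulation. Expanding $\sigma_2(\mu+aI_{n-1})\ge 0$ yields $S_2+\binom{n-1}{2}a^2\ge -(n-2)aS_1\ge -\tfrac{1}{6}W_{11}S_1$, using $W_{11}>6(n-2)a$ and $S_1>0$; adding $W_{11}S_1$ to both sides produces the second inequality of \eqref{acon}. Combining both halves of \eqref{acon} then gives $S_1\le\tfrac{7\sigma_2(W)}{5W_{11}}$. Finally, for each $j\ge 2$,
$$W_{jj}^2\le\sum_{i\ge 2}W_{ii}^2=S_1^2-2S_2\le S_1^2+2(n-2)aS_1+(n-1)(n-2)a^2,$$
and substituting the bound on $S_1$ and comparing term-by-term (using $n-2\le(n-1)^3$) shows the right-hand side is at most $\bigl((n-1)^2 a+\tfrac{7(n-1)\sigma_2(W)}{5W_{11}}\bigr)^2$, proving \eqref{eq:assum}. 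The main obstacle is the Newton-based contradiction in the second paragraph; everything else is algebraic bookkeeping.
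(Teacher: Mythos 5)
Your proposal is correct, and it reaches the same pivot inequality as the paper --- namely $\sigma_2\bigl((W_{22}+a,\dots,W_{nn}+a)\bigr)\ge 0$ --- but justifies it by a genuinely different route. The paper gets this in one line from cone machinery: since $W\in\Gamma_2$ and $a>0$, $W+aI\in\Gamma_2$, so together with $\sigma_3(W+aI)\ge0$ one has $W+aI\in\bar\Gamma_3$, and the standard deletion property of G\aa rding cones ($\lambda\in\bar\Gamma_k\Rightarrow(\lambda|i)\in\bar\Gamma_{k-1}$) gives the claim directly. You instead expand $\sigma_3(W+aI)$ along the first entry and, for $n\ge4$, run a self-contained contradiction via Newton's inequality $\sigma_1\sigma_3\le\frac{2(n-3)}{3(n-2)}\sigma_2^2$ in $n-1$ variables; I checked the constants and the two-sided bound $\frac{3(n-2)}{2(n-3)}W_{11}S_1\le|\sigma_2(\mu+aI_{n-1})|<W_{11}S_1$, and the contradiction is valid (the low-dimensional cases $n\le3$ degenerate correctly). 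Your proof of \eqref{eq:assum} also differs in execution: the paper bounds $W_{22}$ from above via $\sigma_2^{11}(W+aI)\ge W_{22}+a$ (which uses the further deletion fact $\sum_{j\ge3}(W_{jj}+a)\ge0$) and bounds $-W_{nn}$ via $\sigma_2^{11}(W)\ge0$, concluding $|W_{jj}|\le(n-1)W_{22}$; you instead bound $W_{jj}^2\le S_1^2-2S_2$ and feed in the same pivot inequality, which is arguably more economical since it needs only the $\bar\Gamma_2$-type fact and not the $\bar\Gamma_1$-type one. What your approach buys is independence from the characterization of the $\Gamma_k$ cones; what the paper's buys is brevity. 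One cosmetic remark: the symbol $f$ in \eqref{acon} should be read as $\sigma_2(W)$ (as in the paper's own displayed chain), which is how you treated it.
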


\begin{proof}
As $W+aI\in \bar \Gamma_3$, it follows that $(W_{22}+a,\cdots, W_{nn}+a)\in \bar \Gamma_2$, thus
\begin{eqnarray*}
\sigma_{2}(W)-W_{11}\sigma_{2}^{11}(W)+a(n-2)\sigma_{2}^{11}(W)+a^{2}\frac{(n-1)(n-2)}{2}\geq0.\label{eq:acon0}
\end{eqnarray*}

Since $\frac{(n-1)(n-2)}{2}a^{2}\leq\frac{\sigma_2(W)}{6}$ and we may assume $W_{11}-a(n-2)\geq \frac{5W_{11}}{6}$.
Therefore
\begin{equation*}
\frac{7}{6}\sigma_2(W)\geq(\sigma_2(W)+\frac{(n-1)(n-2)}{2}a^{2})\geq(W_{11}-a(n-2))\sigma_{2}^{11}(W)
\geq\frac{5W_{11}}{6}\sigma_{2}^{11}(W).
\end{equation*}

As $W$ is diagonal, $W_{11}\geq W_{22}\geq\cdots\geq W_{nn}$,
and $W+aI\in \bar\Gamma_{3}$,
thus $\sum_{j\ge3}(W_{jj}+a)\ge0$. We have
\begin{eqnarray*}
\sigma_{2}^{11}(W+aI)=W_{22}+a+\sum_{j\ge3}(W_{jj}+a)\geq W_{22}+a,
\end{eqnarray*}
and
\begin{eqnarray*}
W_{22}\leq(n-2)a+\sigma_{2}^{11}(W).\label{eq:assump}
\end{eqnarray*}

On the other hand, as $W\in\Gamma_{2}$,
\begin{eqnarray*}
0\leq\sigma_{2}^{11}(W)\leq(n-2)W_{22}+W_{nn},
\end{eqnarray*}
and
\begin{eqnarray*}
-W_{jj}\leq-W_{nn} & \leq(n-2)W_{22}.\label{eq:ele1}
\end{eqnarray*}

For any $j$ from $2$ to $n$, we obtain \eqref{acon},
\begin{eqnarray*}
|W_{jj}|\leq(n-1)W_{22}\leq(n-1)^{2}a+(n-1)\sigma_{2}^{11}(W)\leq(n-1)^{2}a
+\frac{7(n-1)\sigma_2(W)}{5W_{11}}.
\end{eqnarray*}
\end{proof}

\begin{coro}\label{GQlemW} Under the same assumption as in Lemma \ref{lemma1}, suppose there is $A>0$ such that
\begin{equation}\label{CA3} \sigma_3(W)\ge -A.\end{equation}
If $W^{\frac32}_{11}>6(n-2)\sqrt{A}$, then \begin{equation}\label{aconW} \frac75\sigma_2(W)\geq\sigma_{2}^{11}(W)W_{11},
\end{equation}
and
there is constant $C$ depending only on $n, A, \sigma_2(W)$ such that, for any $j\in \{2,\cdots n\}$,
\begin{equation}\label{eq:assumW}
|W_{jj}|\leq C W^{-\frac12}_{11}.
\end{equation}
\end{coro}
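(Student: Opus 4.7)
The plan is to reduce Corollary \ref{GQlemW} to Lemma \ref{GQlem} by choosing the shift parameter $a := \sqrt{A/W_{11}}$. With this choice the condition $W_{11} > 6(n-2)a$ in Lemma \ref{GQlem} becomes $W_{11}^{3/2} > 6(n-2)\sqrt{A}$, which is exactly the stated hypothesis \eqref{CA3}. The principal task is then to verify that this $a$ also satisfies the remaining hypothesis $\sigma_3(W+aI) \geq 0$.

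I would expand the third symmetric function at shifted eigenvalues:
\begin{equation*}
\sigma_3(W+aI) = \sigma_3(W) + (n-2)a\,\sigma_2(W) + \binom{n-1}{2}a^2\,\sigma_1(W) + \binom{n}{3}a^3.
\end{equation*}
All terms past $\sigma_3(W)$ are non-negative since $W \in \Gamma_2$ (so $\sigma_1(W), \sigma_2(W) > 0$). Using $\sigma_1(W) \geq W_{11}$ (which follows from $\sigma_2^{11}(W) \geq 0$) and $\binom{n-1}{2} \geq 1$ (for $n\geq 3$; the case $n=2$ is vacuous as $\sigma_3\equiv 0$), the quadratic-in-$a$ term alone absorbs the defect:
\begin{equation*}
\binom{n-1}{2}\,a^2\,\sigma_1(W) \;\geq\; W_{11}\cdot(A/W_{11}) \;=\; A \;\geq\; -\sigma_3(W),
\end{equation*}
so $\sigma_3(W+aI) \geq 0$ and hypothesis \eqref{assumption} of Lemma \ref{GQlem} is verified.

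With both hypotheses of Lemma \ref{GQlem} in place, inequality \eqref{acon} immediately gives \eqref{aconW}, and \eqref{eq:assum} specialized to $a=\sqrt{A/W_{11}}$ reads
\begin{equation*}
|W_{jj}| \;\leq\; (n-1)^{2}\sqrt{A}\, W_{11}^{-1/2} + \frac{7(n-1)\sigma_2(W)}{5}\, W_{11}^{-1}.
\end{equation*}
The first term is already of the required form $C\,W_{11}^{-1/2}$. For the second, the hypothesis $W_{11}^{3/2} > 6(n-2)\sqrt{A}$ provides a positive lower bound $W_{11} \geq (6(n-2)\sqrt{A})^{2/3}$, so $W_{11}^{-1} \leq (6(n-2)\sqrt{A})^{-1/3}\,W_{11}^{-1/2}$, and this factor gets absorbed into a constant depending only on $n, A, \sigma_2(W)$, yielding \eqref{eq:assumW}.

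The one real subtlety, and the main obstacle worth flagging, is the auxiliary smallness condition $a \leq \sqrt{\sigma_2(W)/(3(n-1)(n-2))}$ built into Lemma \ref{GQlem}. Unwinding, this becomes $W_{11} \geq 3(n-1)(n-2)A/\sigma_2(W)$. If it fails, then $W_{11}$ is bounded above by a constant depending on $n, A, \sigma_2(W)$; combined with the hypothesis lower bound on $W_{11}$ and the crude inequality $|W_{jj}| \leq (n-1)W_{11}$ from $W \in \Gamma_2$ (as noted in the proof of Lemma \ref{GQlem}), the conclusion \eqref{eq:assumW} holds trivially. Everything else is routine bookkeeping once the choice $a=\sqrt{A/W_{11}}$ is made.
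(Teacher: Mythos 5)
Your proof is correct and follows essentially the same route as the paper: the paper simply picks $a^2=A/\sigma_1(W)$ (versus your $a^2=A/W_{11}$, an immaterial difference since $\sigma_1(W)\ge W_{11}$) and invokes Lemma \ref{GQlem}, leaving the verification of \eqref{assumption} and the bookkeeping to the reader, all of which you supply correctly. Your flagging of the auxiliary constraint $a\le\sqrt{\sigma_2(W)/(3(n-1)(n-2))}$ is a genuine point the paper glosses over (note your fallback only rescues \eqref{eq:assumW}, not \eqref{aconW}, which has no absorbable constant; in the applications this is moot because $W_{11}$ is taken large relative to $n,A,\sigma_2$).
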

\begin{proof} One may pick $a^2=\frac{A}{\sigma_1(W)}$, then condition (\ref{assumption}) is satisfied. The corollary follows from Lemma \ref{GQlem}.  \end{proof}

\begin{coro}\label{GQlemR} Suppose $M^n$ is an immersed hypersurface in $\mathbb R^{n+1}$, suppose at a point $x_0\in M$, the scalar curvature of $M$ at $x_0$ is positive and $\kappa_1\ge \kappa_2\ge \cdots \ge \kappa_n$ are the principal curvatures of $M$ and $\sigma_1(\kappa_1,\cdots,\kappa_n)>0$ at $x_0$. Suppose Ricci curvature is either bounded below by $-A$ or bounded above by $A$ for some constant $A$, then there is constant $C$ depending only on $n, A, \sigma_2(\kappa_1,\cdots,\kappa_n)$ such that, for any $j\in \{2,\cdots n\}$,
\begin{equation}\label{eq:assumR}
|\kappa_{j}|\leq C \kappa^{-1}_{1}.
\end{equation}
If $\kappa_1$ is sufficiently large compared to $A$,  then \begin{equation}\label{aconR} \frac75\sigma_2(\kappa_1,\cdots,\kappa_n)\geq\sigma_{2}^{11}(\kappa_1,\cdots,\kappa_n)\kappa_{1}.
\end{equation} \end{coro}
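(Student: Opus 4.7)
The plan hinges on the Gauss-equation identity that, when the second fundamental form $W=(h_{ij})$ at $x_0$ is diagonalized with eigenvalues $\kappa_1\ge\cdots\ge\kappa_n$, the Ricci eigenvalues factor as $\mathrm{Ric}_i=\kappa_i(\sigma_1-\kappa_i)=\kappa_i\,\sigma_2^{ii}(W)$. Positivity of the scalar curvature gives $\sigma_2=\tfrac12 R_g>0$, which with the hypothesis $\sigma_1>0$ places $W\in\Gamma_2$, so Lemma~\ref{lemma1} applies: $\sigma_1-\kappa_j=\sigma_2^{jj}(W)\ge c_2\sigma_1\ge c_2\kappa_1$ for $j\ge2$, and $\sigma_2^{11}(W)>0$ forces $\sigma_1>\kappa_1>0$. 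I will argue directly from the Ricci bound and deliberately \emph{not} route through Corollary~\ref{GQlemW}, which would only yield the weaker rate $|\kappa_j|=O(\kappa_1^{-1/2})$.

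In the case that Ricci is bounded below by $-A$, for each $j\ge2$ with $\kappa_j<0$ the inequality $|\kappa_j|(\sigma_1-\kappa_j)\le A$ combined with $\sigma_1-\kappa_j\ge c_2\kappa_1$ gives $(\kappa_j)_-\le A/(c_2\kappa_1)$. Summing $\mathrm{Ric}_j\ge-A$ over $j\ge2$ and using $\sum_i\mathrm{Ric}_i=2\sigma_2$ produces $\mathrm{Ric}_1\le2\sigma_2+(n-1)A$, hence $\sum_{j\ge2}\kappa_j=\mathrm{Ric}_1/\kappa_1\le(2\sigma_2+(n-1)A)/\kappa_1$. Each positive part then obeys $(\kappa_j)_+\le\sum_{j\ge2}(\kappa_j)_+\le\sum_{j\ge2}\kappa_j+\sum_{j\ge2}(\kappa_j)_-\le C\kappa_1^{-1}$. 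Combining both signs gives \eqref{eq:assumR} with $C=C(n,A,\sigma_2)$.

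The case that Ricci is bounded above by $A$ is symmetric. For $j\ge2$ with $\kappa_j>0$, $\mathrm{Ric}_j\le A$ together with $\sigma_1-\kappa_j\ge c_2\kappa_1$ gives $(\kappa_j)_+\le A/(c_2\kappa_1)$. Applying the bound at $i=1$ gives $\sum_{j\ge2}\kappa_j\le A/\kappa_1$, and since $\sum_{j\ge2}\kappa_j=\sigma_2^{11}(W)>0$, the negative parts satisfy $(\kappa_j)_-\le\sum_{j\ge2}(\kappa_j)_-\le\sum_{j\ge2}(\kappa_j)_+\le(n-1)A/(c_2\kappa_1)$, completing \eqref{eq:assumR}.

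For \eqref{aconR}, decompose $\sigma_2(\kappa)=\kappa_1\sigma_2^{11}(W)+\sigma_2(\kappa|1)$, where $\sigma_2(\kappa|1)=\sum_{2\le i<j}\kappa_i\kappa_j$. The estimate $|\kappa_j|\le C\kappa_1^{-1}$ just obtained yields $|\sigma_2(\kappa|1)|\le C'\kappa_1^{-2}$, so $\sigma_2^{11}(W)\kappa_1=\sigma_2-\sigma_2(\kappa|1)\le\sigma_2+C'\kappa_1^{-2}$. Taking $\kappa_1$ large enough that $C'\kappa_1^{-2}\le\tfrac{2}{5}\sigma_2$ produces \eqref{aconR}. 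The main obstacle I anticipate is enforcing the correct rate $\kappa_1^{-1}$ rather than the weaker $\kappa_1^{-1/2}$ throughout, and tracking constants to depend only on $(n,A,\sigma_2)$; this relies essentially on the hypersurface-specific factorization $\mathrm{Ric}_i=\kappa_i\,\sigma_2^{ii}(W)$, which lets the Ricci bound act on $\kappa_j$ at full strength, rather than on the purely algebraic input $\sigma_3\ge-A$ used in Corollary~\ref{GQlemW}.
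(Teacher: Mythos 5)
Your argument is correct, and for the main estimate \eqref{eq:assumR} it is essentially the paper's own: both proofs rest on the Gauss-equation factorization $\mathrm{Ric}_{jj}=\kappa_j(\sigma_1-\kappa_j)=\kappa_j\sigma_2^{jj}$ together with $\sigma_2^{jj}\ge c_2\sigma_1\ge c_2\kappa_1$ from Lemma~\ref{lemma1}. The paper packages the one-sided Ricci hypothesis by first upgrading it to a two-sided bound via the trace identity (the sum of the $\mathrm{Ric}_{ii}$ is controlled by $\sigma_2$), then dividing by $\sigma_2^{jj}$; your splitting into positive and negative parts is the same computation written out case by case. Where you genuinely diverge is \eqref{aconR}: the paper feeds \eqref{eq:assumR} back into its algebraic machinery, noting that $|\kappa_j|\le C\kappa_1^{-1}$ for $j\ge 2$ forces $\sigma_3(\kappa)\ge-\tilde A(n,A,\sigma_2)$ and then quoting \eqref{aconW} of Corollary~\ref{GQlemW}, whereas you bypass that corollary entirely through the decomposition $\sigma_2=\kappa_1\sigma_2^{11}+\sigma_2(\kappa|1)$ and the bound $|\sigma_2(\kappa|1)|\le C'\kappa_1^{-2}$. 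Your route is shorter and self-contained, and it even shows the constant $7/5$ could be replaced by anything larger than $1$ once $\kappa_1$ is large; the paper's route costs nothing extra because Lemma~\ref{GQlem} and Corollary~\ref{GQlemW} are needed anyway in the PDE settings where no Ricci control is available and only the weaker decay $|W_{jj}|=O(W_{11}^{-1/2})$ holds. Your observation that one must \emph{not} derive \eqref{eq:assumR} from Corollary~\ref{GQlemW} (which gives only the $\kappa_1^{-1/2}$ rate) is accurate and matches what the paper actually does; the only housekeeping points are the ones you already flag, namely $\kappa_1>0$ and $\sigma_2^{11}>0$, which follow from $(\kappa_1,\dots,\kappa_n)\in\Gamma_2$.
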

\begin{proof} Estimate (\ref{eq:assumR}) was proved in \cite{GLX} and \cite{GLu}. Here we use Lemma \ref{lemma1}. By the assumption, $(\kappa_1,\cdots,\kappa_n)\in \Gamma_2$. If $Ric_{ii}\ge -A, \forall i$, then \[Ric_{jj}\le (n-1)A+\sigma_2(\kappa_1,\cdots,\kappa_n), \forall j,\] since the scalar curvature is $\sigma_2(\kappa_1,\cdots,\kappa_n)$. Thus, Ricci is bounded from below and above. For $j\ge 2$, $\sigma_2^{jj}\kappa_j=Ric_{jj}$. By Lemma \ref{lemma1}, $\sigma_2^{jj}\ge c\kappa_1$, we get (\ref{eq:assumR}).

Note that $\sigma_3(\kappa_1,\cdots,\kappa_n)=\sum_{i<j<l} \kappa_i\kappa_j\kappa_l$. By (\ref{eq:assumR}), \[\sigma_3(\kappa_1,\cdots,\kappa_n)\ge -\tilde A,\]
where $\tilde A$ depends only on $A, n, \sigma_2$. Therefore, (\ref{aconR}) follows from (\ref{aconW}).
 \end{proof}

\section{Hessian Equation}

In this section, we prove the following interior $C^2$ estimate for Hessian equation
 \begin{theorem}\label{thm1} Suppose $u\in C^{4}(\bar{B}_{1})$ is
a solution of equation \eqref{eq:maineq}
and assume that there is a nonnegative constant
$A$ such that
\begin{equation}\label{aA} \sigma_{3}(\nabla^{2}u(x))\ge-A, \forall x\in B_1,\end{equation} then
\begin{equation}
\max_{x\in B_{\frac{1}{2}}}|\nabla^{2}u(x)|\leq C,\label{estH}
\end{equation}
where constant $C$ depending only on $n$, $A$, $\|u\|_{C^{1}}(B_1)$,
$\|f\|_{C^{2}}(B_1)$ and $\|\frac{1}{f}\|_{L^{\infty}}(B_1)$. In particular, if $u$ is convex, interior estimate (\ref{estH}) holds. \end{theorem}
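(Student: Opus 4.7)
The plan is to apply a Pogorelov--Urbas maximum principle argument on $B_1$ with a test function of the form
$$W(x) = \log u_{11}(x) + \phi(|\nabla u|^2) + \beta \log(1-|x|^2),$$
where $u_{11}$ denotes the largest eigenvalue of $\nabla^2 u$, $\phi(t) = -\tfrac12\log(2K-t)$ with $K > \sup_{B_1}|\nabla u|^2$, and $\beta>0$ is chosen at the end. Since $W\to -\infty$ on $\partial B_1$, its maximum is attained at an interior point $x_0$, and the eventual pointwise bound on $W(x_0)$ translates into \eqref{estH} on $B_{1/2}$. At $x_0$, rotate coordinates so that $\nabla^2 u$ is diagonal with $u_{11}\geq u_{22}\geq\cdots\geq u_{nn}$; the potential non-smoothness of $u_{11}$ at eigenvalue collisions is handled by the standard regularization, replacing $u_{11}$ locally by a smooth symmetric-function perturbation agreeing with it at $x_0$.

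From $\nabla W(x_0)=0$ one solves for $u_{11i}/u_{11}$ in terms of $u_i u_{ii}$ and the cutoff, and from $\sigma_2^{ii}W_{ii}(x_0)\leq 0$ combined with the twice-differentiated equation $\sigma_2^{ij}u_{ij11} = f_{11} - \sigma_2^{pq,rs}u_{pq1}u_{rs1}$, the problem reduces to controlling the two third-order quantities $-\sigma_2^{pq,rs}u_{pq1}u_{rs1}/u_{11}$ and $\sigma_2^{ii}u_{11i}^2/u_{11}^2$. At a diagonal point the first expands as $-\sum_{i\neq j}u_{ii1}u_{jj1}+\sum_{i\neq j}u_{ij1}^2$, and I would apply Lemma~\ref{chenlem} with $\xi_{ij}=u_{ij1}$ and constraint $\sigma_2^{ii}\xi_{ii}=f_1$ to bound $-\sum_{i\neq j}u_{ii1}u_{jj1}$ below, yielding a favorable positive multiple of $u_{111}^2$ that must ultimately dominate the bad $\sigma_2^{11}u_{111}^2/u_{11}^2$ coming from differentiating $\log u_{11}$ twice.

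The role of the hypothesis \eqref{aA} is to unlock Corollary~\ref{GQlemW}: once $u_{11}$ exceeds a threshold depending on $n, A, \sigma_2$, one has $|u_{jj}|\leq C u_{11}^{-1/2}$ for $j\geq 2$ together with the pinch $\sigma_2^{11}u_{11}\leq \tfrac{7}{5}\sigma_2$. The first fact renders the contributions of the $i\geq 2$ slots in $\sigma_2^{ii}(\phi'u_i^2)_{ii}$ and in the critical-point expression for $u_{11i}$ lower order, effectively isolating the $i=1$ direction. The second is precisely the algebraic pinch that forces the coefficient of $u_{111}^2$ produced by Chen's inequality \eqref{chen} to strictly dominate $\sigma_2^{11}/u_{11}^2$.

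The main obstacle is exactly this numerical comparison: one must show, using Chen's inequality, the pinch from Corollary~\ref{GQlemW}, and the cross-term corrections supplied by the critical-point relation, that the coefficient of $u_{111}^2/u_{11}^2$ in the resulting differential inequality is strictly negative. Once that step is carried out, the positive contribution $\sigma_2^{ii}(\phi' u_i^2)_{ii}\geq c\,\sigma_2^{11}u_{11}^2$ obtained via \eqref{tildec0} of Lemma~\ref{lemma1} overwhelms all remaining lower order terms involving $|\nabla^2 u|$, $f$, $f_1$, $f_{11}$, and the derivatives of $1-|x|^2$; choosing $\beta$ accordingly yields $(1-|x_0|^2)^{\beta}u_{11}(x_0)\leq C$ and thereby \eqref{estH}. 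The convex case in Theorem~\ref{thm1v} is immediate, since convexity gives $\sigma_3(\nabla^2 u)\geq 0\geq -A$ for $A=0$.
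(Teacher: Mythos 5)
Your treatment of the third--order terms is essentially the paper's: Lemma \ref{chenlem} with $\xi_{ij}=u_{ij1}$, $\eta=\tilde f_1$ gives $-\sum_{i\neq j}u_{ii1}u_{jj1}\geq \frac{1.5f\,u_{111}^2}{u_{11}^2}-Cu_{11}^2$, and the pinch $\sigma_2^{11}u_{11}\leq\frac75\sigma_2$ from Corollary \ref{GQlemW} (this is exactly where \eqref{aA} enters) turns this into a net positive multiple of $\sigma_2^{11}u_{111}^2/u_{11}^2$. That part of your plan is sound and matches the paper.

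The gap is in the localization, which you dismiss as ``lower order.'' For $\sigma_2$ the two relevant quantities scale \emph{identically} in $u_{11}$: under the pinch $|u_{jj}|\leq Cu_{11}^{-1/2}$ one has $\sum_i\sigma_2^{ii}=(n-1)\sigma_1\sim u_{11}$ and $\phi'\sum_i\sigma_2^{ii}u_{ii}^2=\phi'(f\sigma_1-3\sigma_3)\sim c\,u_{11}$, while the cutoff contributes $-\beta\,\sigma_2^{ii}\rho_i^2/\rho^2\sim-\beta\,u_{11}/\rho^2$. So your ``good'' term $c\,\sigma_2^{11}u_{11}^2\geq c_1f u_{11}$ can never absorb the cutoff term when the maximum point sits near $\partial B_1$, no matter how large $u_{11}$ is; this is precisely the obstruction that makes interior $C^2$ estimates for $\sigma_2$ hard (unlike Monge--Amp\`ere, where the good term grows faster in $\lambda_1$). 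The paper's test function is built to defeat exactly this: the double logarithm $\log\log u_{11}$ yields, after the concavity step, a \emph{surviving positive gradient term} $\frac{\sigma_2^{ii}b_i^2}{20b}$ with $b=\log u_{11}$ (the $1/b$ versus $1/b^2$ weighting is what leaves it positive); the additional term $\beta(x\cdot\nabla u-u)$ with $\beta$ tied to $\alpha$ by \eqref{gh} guarantees that the critical point relation \eqref{eq:fp} forces $|b_i/b|$ to be comparable to either $|x_i|/\rho$ (for $i\geq2$, using the smallness of $u_{ii}$) or to $\beta u_{11}$ (for $i=1$); and the three-case analysis on the location of $x_0$ then converts $\frac{\sigma_2^{ii}b_i^2}{20b}$ into either $\frac{c\,b\,\sigma_1}{\rho^2}$ or $c\,\alpha^2 b\,\sigma_1$, which dominates $\sigma_1/\rho^2$ only because of the extra factor $b=\log u_{11}$ (one assumes $\rho^2\log u_{11}$ large, else the bound holds). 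With a single $\log u_{11}$, no gradient term $\phi(|\nabla u|^2)$, and no $x\cdot\nabla u-u$ term, none of this machinery is available: in particular when $x_0$ is near the boundary with $x_1^2\geq\frac1{2n}$, the critical point equation allows $u_{111}/u_{11}$ to be cancelled by the $\phi'u_1u_{11}$ term, and your differential inequality has the wrong sign. You need to replace your test function by the paper's $P$ and carry out the case analysis; the claim that the cutoff contributions are dominated by $c\,\sigma_2^{11}u_{11}^2$ is false as stated.
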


\noindent
{\it Proof of Theorem \ref{thm1}.}
To establish the interior
estimate, for any $x\in\bar{B_{1}}$ and $\beta\in\mathbb{S}^{n-1}$,
we would like to obtain an upper bound of function for $x\in B_1, \vartheta\in \mathbb S^{n-1}$,
\begin{eqnarray*}
\tilde P(x,\vartheta)=2\log\rho(x)+\alpha(\frac{|\nabla u|^{2}}{2})+\beta(x\cdot\nabla u-u)+\log\log\max\{u_{\vartheta\vartheta},2\},
\end{eqnarray*}
where $\rho(x)=1-|x|^{2}$, and $\alpha,\;\beta$ are constants to be determined
later. The pick of test function $P$ is inspired by a recent work
\cite{GRW}.

The maximum value of $\tilde P(x,\beta)$ in $\bar{B_{1}}\times\mathbb{S}^{n-1}$
must be attained in an interior point of $B_{1}$ since $\rho=0$
on the boundary. We suppose $x_{0}\in B_{1}$ is a maximum point,
and maximum direction $\vartheta(x_{0})=e_{1}$. Then it can be seen easily
that $u_{1i}(x_{0})=0$ for any $i=2,\cdots n$. So we may assume
$\nabla^{2}u$ is diagonalized at this point. We choose coordinate
frame $\{e_{1},e_{2},\cdots e_{n}\}$, such that $\nabla^{2}u(x_{0})$
is diagonal and $u_{11}(x_{0})\ge u_{22}(x_{0})\ge\cdots\ge u_{nn}(x_{0})$.
We may
assume that $u_{11}(x_{0})\ge3$ is sufficiently large.
Now consider function
\begin{eqnarray}
 P(x)=2\log\rho(x)+\alpha(\frac{|\nabla u|^{2}}{2})+\beta(x\cdot\nabla u-u)+\log\log u_{11}.
\end{eqnarray}
Note that $x_0$ is also a maximum point of $P$.
We now want to estimate $P(x_{0})$.

Denote $b=\log u_{11}$.  At $x_{0}$,
\begin{equation}
0=P_{i}=\frac{2\rho_{i}}{\rho}+\alpha u_{k}u_{ki}+\beta(x_{k}u_{ki})+\frac{b_{i}}{b},\label{eq:fp}
\end{equation}
\begin{eqnarray*}
\quad\quad P_{ij}=2\frac{\rho_{ij}}{\rho}-2\frac{\rho_{i}\rho_{j}}{\rho^{2}}+\alpha(u_{kj}u_{ki}+u_{k}u_{kij})+\beta(u_{ij}+x_{k}u_{kij})+\frac{b_{ij}}{b}-\frac{b_{i}b_{j}}{b^{2}}.\label{eq:fp2}
\end{eqnarray*}

Contracting with $\sigma_{2}^{ij}:=\frac{\partial\sigma_{2}(\nabla^{2}u)}{\partial u_{ij}}$,
\begin{eqnarray*}
\sigma_{2}^{ij}P_{ij} & = & -4\frac{\sum\sigma_{2}^{ii}}{\rho}-8\frac{\sigma_{2}^{ii}x_{i}^{2}}{\rho^{2}}+\alpha\sigma_{2}^{ii}u_{ii}^{2}+\alpha\sigma_{2}^{ii}u_{iik}u_{k}\nonumber \\
 &  & +\beta\sigma_{2}^{ii}u_{ii}+\beta x_{k}\sigma_{2}^{ii}u_{iik}+\frac{\sigma_{2}^{ii}b_{ij}}{b}-\frac{\sigma_{2}^{ii}b_{i}^{2}}{b^{2}}.\label{eq:}
\end{eqnarray*}

Differentiate  equation \eqref{eq:maineq} in $k$-th variable,
\begin{eqnarray*}
\sigma_{2}^{ii}u_{iik}=\tilde{f}_{k},
\end{eqnarray*}
where $\tilde{f}(x)=f(x,u(x),\nabla u(x))$. 
Thus,
\begin{eqnarray}
\sigma_{2}^{ij}P_{ij} & = & -4\frac{(n-1)\sigma_{1}}{\rho}-8\frac{\sigma_{2}^{ii}x_{i}^{2}}{\rho^{2}}+\alpha(f\sigma_{1}-3\sigma_{3})
+2\beta f\nonumber \\
 &  & +\alpha\tilde{f}_{k}u_{k}+\beta\tilde{f}_{k}x_{k}+\frac{\sigma_{2}^{ii}b_{ii}}{b}
 -\frac{\sigma_{2}^{ii}b_{i}^{2}}{b^{2}}.\label{eq:P1}
\end{eqnarray}

\medskip{}

First we deal with the the last two terms in (\ref{eq:P1}).
Recall $b=\log u_{11}$. We have
\begin{equation}
b_{i}=\frac{u_{11i}}{u_{11}},
\end{equation}

and
\begin{equation}
\sigma_{2}^{ii}b_{ii}=\frac{\sigma_{2}^{ii}u_{11ii}}{u_{11}}-\frac{\sigma_{2}^{ii}u_{11i}^{2}}{u_{11}^{2}}.\label{eq:b2-1}
\end{equation}

Take one more derivative of the equation,
\begin{eqnarray*}
\sigma_{2}^{ij}u_{ij11}=\sum_{i\neq j}u_{ij1}^{2}-\sum_{i\neq j}u_{ii1}u_{jj1}+\tilde{f}_{11}.
\end{eqnarray*}
Insert it to \eqref{eq:b2-1},
\begin{eqnarray*}
\sigma_{2}^{ii}b_{ii}\geq\frac{\sum_{i\neq j}u_{ij1}^{2}-\sum_{i\neq j}u_{ii1}u_{jj1}+\tilde{f}_{11}}{u_{11}}-\frac{\sigma_{2}^{ii}u_{11i}^{2}}{u_{11}^{2}}.
\end{eqnarray*}

Set $W=\nabla^2 u$, $\xi_{ij}=u_{ij1}$ and $\eta=\tilde{f}_1$, in view of Lemma \ref{chenlem} and
the Cauchy-Schwarz inequality, we have $\forall n\ge2$,
\begin{equation}
-\sum_{i\neq j}u_{ii1}u_{jj1}\geq\frac{1.5fu_{111}^{2}}{u_{11}^{2}}-Cu_{11}^{2},\label{chen1}
\end{equation}
where $C$ depending on $\|f\|_{C^{1}}$, $\|\frac{1}{f}\|_{L^{\infty}}$
and $\|u\|_{C^{1}}$. In the rest of this paper, we will denote
$C$ to be constant under control (depending only on $n$, $\|f\|_{C^{2}}$,
$\|\frac{1}{f}\|_{L^{\infty}}$ and $\|u\|_{C^{1}}$), which may change
line by line.

It follows that
\begin{eqnarray}
\sigma_{2}^{ii}b_{ii} & \geq & \frac{2\sum_{i\geq2}u_{11i}^{2}}{u_{11}}+\frac{1.5fu_{111}^{2}}{u_{11}^{3}}
-\frac{\sigma_{2}^{ii}u_{11i}^{2}}{u_{11}^{2}}+\frac{f_{p_{i}}u_{i11}}{u_{11}}-Cu_{11}\nonumber \\
 & \geq & \frac{\sum_{i\geq2}(2u_{11}-\sigma_{2}^{ii})u_{11i}^{2}}{u_{11}^{2}}
 +\frac{1.5fu_{111}^{2}}{u_{11}^{3}}-\frac{\sigma_{2}^{11}u_{111}^{2}}{u_{11}^{2}}
 +\frac{f_{p_{i}}u_{i11}}{u_{11}}-Cu_{11}.\label{eq:b2}
\end{eqnarray}

Since $\sigma_3(\nabla^2 u)\ge -A$, by Corollary \ref{GQlem}, with $W=\nabla^2 u, \xi_{ij}=u_{ij1}$, as $u_{11}$ can be assumed to be sufficiently large at $x_0$,
\begin{eqnarray}
\frac{3fu_{111}^{2}}{2u_{11}^{3}}-\frac{\sigma_{2}^{11}u_{111}^{2}}{u_{11}^{2}}
\geq\frac{\sigma_{2}^{11}u_{111}^{2}}{10u_{11}^{2}}.\label{eq:acon1}
\end{eqnarray}

\begin{equation}
\sigma_{2}^{ii}b_{ii}\geq\frac{\sigma_{2}^{ii}b_{i}^{2}}{10}
+\frac{f_{p_{i}}u_{i11}}{u_{11}}-Cu_{11}.\label{eq:b3}
\end{equation}

Again, we may assume that at $x_0$, $b\ge 20$, by \eqref{eq:b3} and \eqref{eq:P1},
\begin{eqnarray*}
\sigma_{2}^{ij}P_{ij} & \geq & -4\frac{(n-1)\sigma_{1}}{\rho}-8\frac{\sigma_{2}^{ii}x_{i}^{2}}{\rho^{2}}+\alpha(f\sigma_{1}-3\sigma_{3})
+\frac{\sigma_{2}^{ii}b_{i}^{2}}{20b}\\
 &  & +\frac{f_{p_{i}}u_{i11}}{bu_{11}}+\alpha\tilde{f}_{k}u_{k}+\beta\tilde{f}_{k}x_{k}-C(|\beta|+|\alpha|+
 \frac{u_{11}}{b}).
\end{eqnarray*}
By the critical point condition \eqref{eq:fp},
\[
\frac{f_{p_{i}}u_{i11}}{bu_{11}}+\alpha\tilde{f}_{k}u_{k}+\beta\tilde{f}_{k}x_{k}=\sum_{i}\frac{4f_{p_{i}}x_{i}}{\rho}+C.
\]
We may assume that  $\rho^{2}b$ is sufficiently large at $x_{0}$,
\begin{eqnarray}
\sigma_{2}^{ij}P_{ij} & \geq & -4\frac{(n-1)\sigma_{1}}{\rho}-8\frac{\sigma_{2}^{ii}x_{i}^{2}}{\rho^{2}}+\alpha(f\sigma_{1}-3\sigma_{3})
+\frac{\sigma_{2}^{ii}b_{i}^{2}}{20b}\label{P2}\\
 &  & -C(|\beta|+|\alpha|+\frac{\sigma_{1}}{b}).\nonumber
\end{eqnarray}

\medskip{}

We divided it into three cases. Let us denote the coordinate of the maximum point as $x_0=(x_1,x_2,\cdots,x_n)$.

\medskip{}

\noindent \textit{Case 1}: $|x_{0}|^{2}\leq\frac{1}{2}$.

In this case, $\frac{1}{\rho}\leq2$. By Newton-MacLaurin inequality, we have
\begin{eqnarray*}
\sigma_{2}^{ij}P_{ij}\geq-8(n-1)\sigma_{1}-16(n-1)\sigma_{1}+\frac{\inf_{B_{1}}f}{n}\alpha\sigma_{1}
-C(|\beta|+|\alpha|+\frac{\sigma_{1}}{b}).
\end{eqnarray*}
Pick $\alpha=\frac{24n^{2}+C}{\inf_{B_{1}}f}$,
the estimate follows in this case.

\medskip{}

\noindent \textit{Case 2}: $|x_{0}|^{2}\geq\frac{1}{2}$ and there
exists $j\geq2$, such that $x_{j}^{2}\geq\frac{1}{2n}$.

By \eqref{eq:fp},
\begin{eqnarray*}
\frac{\sigma_{2}^{jj}b_{j}^{2}}{b^{2}} & = & \sigma_{2}^{jj}[-\frac{4x_{j}}{\rho}+\alpha u_{j}u_{jj}+\beta x_{j}u_{jj}]^{2}.
\end{eqnarray*}

In view of (\ref{eq:assumW}), we may assume that
\[|-\frac{4x_{j}}{\rho}+\alpha u_{j}u_{jj}+\beta x_{j}u_{jj}|\ge |\frac{2x_{j}}{\rho}|.\]
It follows from (\ref{s222}) that,
\begin{equation}
\frac{\sigma_{2}^{jj}b_{j}^{2}}{b^{2}}\text{\ensuremath{\geq}}
\sigma_{2}^{jj}(\frac{2x_{j}}{\rho})^{2}\geq\frac{2c_{2}\sigma_{1}}{n\rho^{2}}.
\end{equation}
We get 
\begin{eqnarray*}
\sigma_{2}^{ij}P_{ij} & \geq & -4\frac{(n-1)\sigma_{1}}{\rho}-8\frac{\sigma_{2}^{ii}x_{i}^{2}}{\rho^{2}}+\frac{c_{2}b\sigma_{1}}{10n\rho^{2}}
-C(|\beta|+|\alpha|+\frac{\sigma_{1}}{b})\\
 & \geq & -4\frac{(n-1)\sigma_{1}}{\rho}-8\frac{\sigma_{1}}{\rho^{2}}+\frac{c_{2}b\sigma_{1}}{10n\rho^{2}}--C(|\beta|+|\alpha|+\frac{\sigma_{1}}{b}).
\end{eqnarray*}
Thus, the estimate follows for \textit{Case 2}.

\medskip

\noindent \textit{Case 3}: $|x_{0}|^{2}\geq\frac{1}{2}$ and $x_{1}^{2}\geq\frac{1}{2n}$.

From \eqref{eq:fp},
\begin{eqnarray*}
0=-\frac{4x_{1}}{\rho}+(\alpha u_{1}+\beta x_{1})u_{11}+\frac{b_{1}}{b}.
\end{eqnarray*}

We may assume $\beta u_{11}\geq\frac{12}{\rho}$, otherwise we would have
the estimate. We choose $\beta$ such that
\begin{equation}
\frac{\beta}{\sqrt{2n}}=3\alpha\sup_{B_{1}}|\nabla u|.\label{gh}
\end{equation}
Then we have
\begin{equation}
(\frac{b_{1}}{b})^{2}\geq\frac{\beta^{2}x_{1}^{2}u_{11}^{2}}{9}\geq\frac{\beta^{2}u_{11}^{2}}{18n}.
\end{equation}

Together with (\ref{tildec0}),
\begin{eqnarray}
\sigma_{2}^{ij}P_{ij} & \geq & -4\frac{(n-1)\sigma_{1}}{\rho}-8\frac{\sigma_{2}^{ii}x_{i}^{2}}{\rho^{2}}+\frac{\sigma_{2}^{11}b_{1}^{2}}{20b}
-C(|\beta|+|\alpha|+\frac{\sigma_{1}}{b})\nonumber \\
 & \geq & -4\frac{(n-1)\sigma_{1}}{\rho}-8\frac{(n-1)\sigma_{1}}{\rho^{2}}+\frac{b\beta^{2}\sigma_{2}^{11}u_{11}^{2}}{360n}
 -C(|\beta|+|\alpha|+\frac{\sigma_{1}}{b})\nonumber \\
 & \geq & -4\frac{(n-1)\sigma_{1}}{\rho}-8\frac{(n-1)\sigma_{1}}{\rho^{2}}+\frac{c_{1}\alpha^{2}b\sigma_{1}}{360n}
 -C(|\beta|+|\alpha|+\frac{\sigma_{1}}{b}).
\end{eqnarray}
Hence the estimate follows in \textit{Case 3}. Proof of  Theorem \ref{thm1} is complete. \qed


\section{Scalar Curvature Equation}

We turn to the prescribing scalar curvature equation. Let $M\subset \mathbb R^{n+1}$ be a piece of hypersurface as a graph over $B_r\subset \mathbb R^{n+1}$ for some $r>0$, denote $\nu$ be the normal of the hypersurface and $\kappa_1, \cdots, \kappa_n$ be the principal curvatures of $M$. The scalar curvature of $M$ is $\sigma_2(\kappa_1,\cdots, \kappa_n)$, assume it satisfies equation (\ref{seq:maineq}) for some positive $C^2$ function $f$ in $\mathbb R^{n+1}\times \mathbb S^n$.
\begin{theorem}\label{thm1s} Suppose $M$ is a graph over $B_r\subset \mathbb R^n$ and it is
a solution of equation \eqref{seq:maineq}
and assume that there is a nonnegative constant
$A$ such that the second fundamental form  $h$ of $M$
\begin{equation}\label{saA} \sigma_{3}(\nabla^{2}h(x))\ge-A, \forall x\in B_r,\end{equation} then
\begin{equation}
\max_{x\in B_{\frac{r}{2}}}|\kappa_i(x)|\leq C,\label{sestH}
\end{equation}
where constant $C$ depending only on $n$, $r$, $A$, $\|M\|_{C^{1}(B_r)}$,
$\|f\|_{C^{2}(B_r)}$ and $\|\frac{1}{f}\|_{L^{\infty}}(B_r)$. \end{theorem}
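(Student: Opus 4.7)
The plan is to parallel the proof of Theorem \ref{thm1} with the second fundamental form $h$ of $M$ in place of $\nabla^2 u$ and local orthonormal frames on $M$ in place of Euclidean ones. Since $M$ is a graph over $B_r$, parametrize by $B_r$ with graph function $u$ and take as test function
\begin{equation*}
\tilde P(x, \vartheta) = 2\log \rho(x) + \alpha\tfrac{|\nabla u|^2}{2} + \beta\bl x\cdot \nabla u - u\br + \log\log \max\{h(\vartheta, \vartheta), 2\},
\end{equation*}
with $\rho(x) = r^2 - |x|^2$. The maximum over $\bar B_r \times \mathbb S^{n-1}$ is attained at an interior point $x_0$ with maximizing direction $e_1$; fix a local orthonormal tangent frame diagonalizing $h$ at $x_0$ with $h_{11} \geq h_{22} \geq \cdots \geq h_{nn}$, and assume $h_{11}$ is sufficiently large (else the estimate is trivial). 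Set $b := \log h_{11}$ and $P := 2\log\rho + \alpha|\nabla u|^2/2 + \beta(x\cdot\nabla u - u) + b$, so that $x_0$ is also a maximum of $P$.

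Next I differentiate equation \eqref{seq:maineq} once and twice, using Codazzi's full symmetry of $h_{ij,k}$ on a hypersurface in $\mathbb R^{n+1}$, to obtain $\sigma_2^{ii} h_{ii,k} = (\tilde f)_{,k}$ and $\sigma_2^{ii} h_{ii,11} = \sum_{i\neq j} h_{ij,1}^2 - \sum_{i\neq j}h_{ii,1}h_{jj,1} + (\tilde f)_{,11}$, where $\tilde f$ absorbs the $X$- and $\nu$-dependence of $f$. The Gauss equation supplies the commutator $h_{11,ii} - h_{ii,11} = \kappa_1\kappa_i(\kappa_1 - \kappa_i)$ for $i \neq 1$, which after contracting with $\sigma_2^{ii}$ and dividing by $h_{11}$ contributes an $O(h_{11})$ correction to $\sigma_2^{ii} b_{ii}$. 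Applying Lemma \ref{chenlem} with $W = h$, $\xi_{ij} = h_{ij,1}$, $\eta = (\tilde f)_{,1}$ yields $-\sum_{i\neq j} h_{ii,1} h_{jj,1} \geq \tfrac{3f h_{111}^2}{2 h_{11}^2} - Ch_{11}^2$, and Corollary \ref{GQlemW} (invoking $\sigma_3(h) \geq -A$) absorbs $-\sigma_2^{11} h_{111}^2/h_{11}^2$. Exactly as in \eqref{eq:b3} this produces the cornerstone inequality $\sigma_2^{ii} b_{ii} \geq \sigma_2^{ii} b_i^2/10 - Ch_{11}$, and hence
\begin{equation*}
\sigma_2^{ij} P_{ij} \geq -\frac{4(n-1)\sigma_1}{\rho} - \frac{8\sigma_2^{ii}x_i^2}{\rho^2} + \alpha\bl f\sigma_1 - 3\sigma_3\br + \frac{\sigma_2^{ii}b_i^2}{20b} - C(|\alpha|+|\beta|+\tfrac{\sigma_1}{b}).
\end{equation*}
The critical point equation $0 = P_i$ has the same form as \eqref{eq:fp} up to harmless terms, so the three-case analysis ($|x_0|^2$ small; $|x_0|^2$ large with $x_j^2 \geq r^2/(2n)$ for some $j \geq 2$; $|x_0|^2$ large with $x_1^2 \geq r^2/(2n)$) and the choices of $\alpha, \beta$ transplant verbatim from Section 3, with the constant $C$ additionally depending on $r$ through $\rho$.

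The main obstacle is bookkeeping the $\nu$-dependence of $f$ and the Gauss/Codazzi commutators. Differentiating $f(X, \nu)$ produces $f_{\nu_\alpha} h_{kj}\langle e_j, E_\alpha\rangle$ factors: for $j \geq 2$ these are controlled by the a priori estimate $|h_{jj}| \leq C h_{11}^{-1/2}$ of Corollary \ref{GQlemW}, while for $j = 1$ they are absorbed into the $\rho$-terms via the critical point equation in the same manner as the $f_{p_i} u_{i11}$ term is handled in Section 3. The Gauss commutator's $O(h_{11})$ contribution to $\sigma_2^{ii} b_{ii}$ is dominated by the good term $\alpha f\sigma_1$ upon choosing $\alpha$ sufficiently large. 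Once these adjustments are made, the Hessian argument of Section 3 carries over directly to yield \eqref{sestH}.
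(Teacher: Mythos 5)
Your proposal is essentially the paper's own proof: in graph coordinates the paper's test function $2\log\rho+\log\log h_{11}-\beta\frac{(X,\nu)}{(\nu,E_{n+1})}+\alpha\frac{1}{(\nu,E_{n+1})^2}$ is exactly yours, since $\frac{(X,\nu)}{(\nu,E_{n+1})}=u-x\cdot\nabla u$ and $\frac{1}{(\nu,E_{n+1})^2}=1+|\nabla u|^2$; the use of Codazzi, the commutator \eqref{eq:commute2}, Lemma \ref{chenlem} and Corollary \ref{GQlemW}, the cancellation of the $O(h_{11})$ terms coming from $d_\nu f$ via the critical point equation, and the absorption of the Gauss commutator by the term quadratic in $h_{ii}$ all match the paper's computation leading to \eqref{eq:concave}.

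There is, however, one concrete gap: the claim that the three-case analysis ``transplants verbatim'' with the dichotomy phrased in the Euclidean coordinates $x_j$ of $x_0$. The critical point equation must be taken in the tangent frame $\{e_i\}$ of $M$ that diagonalizes $h$, which is not aligned with the coordinate axes of $\mathbb R^n$; there $\nabla_{e_i}(2\log\rho)=-4a_i/\rho$ with $a_i=(X,e_i)-(X,E_{n+1})(e_i,E_{n+1})$, so the cases must be split according to the sizes of the $a_j$ (the paper first verifies $\sum a_i^2\sim\sum(X,E_i)^2$ in \eqref{eq:equvalent}). More importantly, in the third case the coefficient of $h_{11}$ in $\nabla_{e_1}P=0$ is $\beta\,b_1/(\nu,E_{n+1})^2+2\alpha(e_1,E_{n+1})/(\nu,E_{n+1})^3$ with $b_1=(X,e_1)(\nu,E_{n+1})-(e_1,E_{n+1})(X,\nu)$, and one must prove a positive lower bound $|b_1|\geq\tfrac12|a_1||(\nu,E_{n+1})|$ before concluding $|h_{111}|/(h_{11}\log h_{11})\gtrsim\beta h_{11}$. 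This is the one step where the curvature case genuinely departs from the Hessian case (where the corresponding coefficient is simply $\alpha u_1+\beta x_1$ and the bound is immediate from the choice \eqref{gh}); the paper needs the decomposition argument through \eqref{eq:e3}--\eqref{eq:epsi2} to establish it, and your outline does not address it. The rest of your plan is sound and, once this lemma on $b_1$ is supplied, the argument closes as you describe.
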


Suppose that a hypersurface $M$ in $\mathbb{R}^{n+1}$ can be written
as a graph over $B_{r}\subseteq\mbox{\ensuremath{\mathbb{R}}}^{n}$.
At any point of $x\in B_{1}$, the principal curvature $\kappa=(\kappa_{1},\kappa_{2},\cdots,\kappa_{n})$
of the graph $M=(x,u(x))$ satisfy a equation
\begin{equation}
\sigma_{2}(\kappa)=f(X,\nu)>0,\label{eq:graph}
\end{equation}
where $X$ is the position vector of $M$, and $\nu$ a outer normal
vector on $M$.

We choose an orthonormal frame in $\mathbb{R}^{n+1}$ such that $\{e_{1},e_{2},\cdots,e_{n}\}$
are tangent to $M$ and $\nu$ is outer normal on $M$. We recall
the following fundamental formulas of a hypersurface in $\mathbb{R}^{n+1}$:
\begin{eqnarray*}
X_{ij} & = & -h_{ij}\nu\begin{array}{cc}
 & (Gauss\,\,formula)\end{array}\\
\nu_{i} & = & h_{ij}e_{j}\begin{array}{cc}
 & (Weingarten\,\,equation)\end{array}\\
h_{ijk} & = & h_{ikj}\begin{array}{cc}
 & (Codazzi\,\,equation)\end{array}\\
R_{ijkl} & = & h_{ik}h_{jl}-h_{il}h_{jk}\begin{array}{cc}
 & (Gauss\,\,equation)\end{array},
\end{eqnarray*}
where $R_{ijkl}$ is the curvature tensor. We also have the following
commutator formula:
\begin{eqnarray}
h_{ijkl}-h_{ijlk} & = & h_{im}R_{mjkl}+h_{mj}R_{mikl}.\label{eq:commute}
\end{eqnarray}

Combining Codazzi equation, Gauss equation and (\ref{eq:commute}),
we have
\begin{eqnarray}
h_{iikk} = h_{kkii}+\sum_{m}(h_{im}h_{mi}h_{kk}-h_{mk}^{2}h_{ii}).\label{eq:commute2}
\end{eqnarray}

We now prove Theorem \ref{thm1s} in this section. The idea is similar to the proof of Theorem \ref{thm1}, the construction of the test function is a little more subtle.

\medskip

\noindent
{\it Proof of Theorem \ref{thm1s}.}
For simplicity of notation, we work on the case $r=1$. The argument here can easily be carried over for general $r>0$. At any point $X(x)\in M$ and any unit tangential vector $\vartheta$ on
$M$, we consider the auxiliary function in $B_{1}$
\begin{equation}
P(X(x),\vartheta)=2\log\rho(X)+\log\log h_{\vartheta\vartheta}-\beta\frac{(X,\nu)}{(\nu,E_{n+1})}+\alpha\frac{1}{(\nu,E_{n+1})^{2}},
\end{equation}
where $E_{n+1}=(0,\cdots,0, 1)$, $\rho(X)=1-|X|_{\mathbb{R}^{n+1}}^{2}+(X,E_{n+1})^{2}=1-|x|_{\mathbb{R}^{n}}^{2}$,
$\alpha,\beta$ are constants to be determined later. So the maximum
of this function is attained in interior point of $B_{1}$, say $x_{0}$,
and $\vartheta(x_{0})=e_{1}(x_{0})$. It is easy to see $h_{1i}(x_{0})=0$,
for any $i=2,\cdots n$ after you fix $e_{1}$. Then rotate $\{e_{2},e_{3},\cdots e_{n}\}$
such that $h_{ij}(x_{0})$ is diagonal.
Denote $F^{ij}=\frac{\partial\sigma_{2}}{\partial h_{ij}}$,
which is positive definite when $\kappa\in\Gamma_{2}$. At point $x_{0}$,
\begin{equation}\label{eq:critical}
0=P_{i}=\frac{2\rho_{i}}{\rho}+\frac{h_{11i}}{h_{11}\log h_{11}}-\beta[\frac{(X,\nu)}{(\nu,E_{n+1})}]_{i}-\alpha\frac{2(\nu_{i},E_{n+1})}{(\nu,E_{n+1})^{3}},
\end{equation}
and
\begin{eqnarray}\label{eq:maximum}
0\geq F^{ij}P_{ij} & = & \frac{2F^{ii}\rho_{ii}}{\rho}-\frac{2F^{ii}\rho_{i}^{2}}{\rho^{2}}   -\beta F^{ii}[\frac{(X,\nu)}{(\nu,E_{n+1})}]_{ii} \\ \nonumber
 &  & +\frac{F^{ii}h_{11ii}}{h_{11}\log h_{11}}-(\log h_{11}+1)\frac{F^{ii}h_{11i}^{2}}{h_{11}^{2}\log^{2}h_{11}}\\ \nonumber
 &  & -\alpha\frac{2F^{ii}(\nu_{ii},E_{n+1})}{(\nu,E_{n+1})^{3}}+6\alpha\frac{F^{ii}(\nu_{i},E_{n+1})^{2}}{(\nu,E_{n+1})^{4}}.
\end{eqnarray}

From the fundamental formulas from hypersurface, we have
\begin{equation}
\nu_{ii}=(h_{ik}e_{k})_{i}=h_{iik}e_{k}-h_{ik}h_{ki}\nu,\label{eq:nu}
\end{equation}

\begin{equation}\label{eq:Xnu1}
[\frac{(X,\nu)}{(\nu,E_{n+1})}]_{i}=\sum_{l}\frac{(X,e_{l})h_{il}}{(\nu,E_{n+1})}-\frac{(e_{l},E_{n+1})h_{il}(X,\nu)}{(\nu,E_{n+1})^{2}},
\end{equation}

and
\begin{eqnarray}\label{eq:Xnu2}
[\frac{(X,\nu)}{(\nu,E_{n+1})}]_{ii} & = & \frac{h_{ii}}{(\nu,E_{n+1})}-\frac{\sum_{l}(X,\nu)h_{il}^{2}}{(\nu,E_{n+1})}+\frac{(X,e_{l})h_{ili}}{(\nu,E_{n+1})}\\ \nonumber
 &  & -\sum_{l,k}\frac{(e_{k,}E_{n+1})h_{ki}(X,e_{l})h_{il}}{(\nu,E_{n+1})^{2}}
 -\frac{h_{il}(e_{l,}E_{n+1})(X,e_{k})h_{ki}}{(\nu,E_{n+1})^{2}}\\ \nonumber
 &  & +\sum_{l}\frac{h_{li}^{2}(X,\nu)}{(\nu,E_{n+1})}-\frac{(e_{l},E_{n+1})(X,\nu)h_{ili}}{(\nu,E_{n+1})^{2}}\\ \nonumber
 &  & +\sum_{l,k}\frac{2(e_{l},E_{n+1})h_{il}(X,\nu)h_{ki}(e_{k},E_{n+1})}{(\nu,E_{n+1})^{3}}\\ \nonumber
 & = & \frac{h_{ii}}{(\nu,E_{n+1})}-2[\frac{(X,\nu)}{(\nu,E_{n+1})}]_{i}\frac{\sum_{l}(e_{l},E_{n+1})h_{il}}{(\nu,E_{n+1})}\\ \nonumber
 &  & +\frac{(X,e_{l})(\nu,E_{n+1})-(e_{l},E_{n+1})(X,\nu)}{(\nu,E_{n+1})^{2}}h_{iil}.
\end{eqnarray}
Moreover,
\begin{equation}\label{eq:rho1}
\rho_{i}=-2(X,e_{i})+2(X,E_{n+1})(e_{i},E_{n+1}),
\end{equation}
and
\begin{equation}\label{eq:rho2}
\rho_{ii}=-2+2(X,\nu)h_{ii}+2(e_{i},E_{n+1})^{2}-2(X,E_{n+1})h_{ii}(\nu,E_{n+1}).
\end{equation}

Insert(\ref{eq:critical}), (\ref{eq:nu}), (\ref{eq:Xnu1}), (\ref{eq:Xnu2}),
(\ref{eq:rho1}) and (\ref{eq:rho2}) into (\ref{eq:maximum}),
\begin{eqnarray}\label{eq:max2}
F^{ij}P_{ij} & = & \frac{4F^{ii}(-1+(e_{i},E_{n+1})^{2}+(X,\nu)h_{ii}-(X,E_{n+1})h_{ii}(\nu,E_{n+1}))}{\rho}\\ \nonumber
 &  & -\frac{8F^{ii}[(X,e_{i})-(X,E_{n+1})(e_{i},E_{n+1})]^{2}}{\rho^{2}}   -(\log h_{11}+1)\frac{F^{ii}h_{11i}^{2}}{h_{11}^{2}\log^{2}h_{11}}\\ \nonumber
 &  & +2F^{ii}\frac{\sum_{l}(e_{l},E_{n+1})h_{il}}{(\nu,E_{n+1})}(\frac{2\rho_{i}}{\rho}+\frac{h_{11i}}{h_{11}\log h_{11}}-\alpha\frac{2(\nu_{i},E_{n+1})}{(\nu,E_{n+1})^{3}})\\ \nonumber
 &  & -\beta\frac{F^{ii}h_{ii}}{(\nu,E_{n+1})}-\beta F^{ii}h_{iil}\frac{(X,e_{l})(\nu,E_{n+1})-(e_{l},E_{n+1})(X,\nu)}{(\nu,E_{n+1})^{2}}\\ \nonumber
 &  & -\alpha\frac{2F^{ii}(h_{iik}e_{k}-h_{ik}h_{ki}\nu,E_{n+1})}{(\nu,E_{n+1})^{3}}
 +6\alpha\frac{F^{ii}(\nu_{i},E_{n+1})^{2}}{(\nu,E_{n+1})^{4}} +\frac{F^{ii}h_{11ii}}{h_{11}\log h_{11}} \\ \nonumber
 & \geq & -\frac{4\sum_{i}F^{ii}}{\rho}+\frac{4F^{ii}h_{ii}[(X,\nu)-(X,E_{n+1})(\nu,E_{n+1})]}{\rho}+\frac{F^{ii}h_{11ii}}{h_{11}\log h_{11}}\\ \nonumber
 &  & -\frac{8F^{ii}[(X,e_{i})-(X,E_{n+1})(e_{i},E_{n+1})]^{2}}{\rho^{2}}-(\log h_{11}+1)\frac{F^{ii}h_{11i}^{2}}{h_{11}^{2}\log^{2}h_{11}}\\ \nonumber
 &  & +2F^{ii}\frac{(e_{i},E_{n+1})h_{ii}}{(\nu,E_{n+1})}(\frac{2\rho_{i}}{\rho}+\frac{h_{11i}}{h_{11}\log h_{11}}) -\alpha\frac{2F^{ii}h_{iik}(e_{k},E_{n+1})}{(\nu,E_{n+1})^{3}}\\ \nonumber
 &  & -\beta\frac{F^{ii}h_{ii}}{(\nu,E_{n+1})}-\beta F^{ii}h_{iil}\frac{(X,e_{l})(\nu,E_{n+1})-(e_{l},E_{n+1})(X,\nu)}{(\nu,E_{n+1})^{2}}+\frac{2\alpha F^{ii}h_{ii}^{2}}{(\nu,E_{n+1})^{2}}.
\end{eqnarray}

Differentiate equation (\ref{eq:graph}) twice, we
have
\begin{equation}\label{eq:G2}
F^{ij}h_{ijl}=d_{X}f(e_{l})+h_{kl}d_{\nu}f(e_{k}),
\end{equation}
and
\begin{eqnarray}\label{eq:G3}
F^{ii}h_{ii11} & = & \sum_{i\neq k}h_{ik1}^{2}-\sum_{i\ne k}h_{ii1}h_{kk1}\\ \nonumber
 &  & +d_{X}^{2}f(e_{1},e_{1})+h_{11}d_{X,\nu}^{2}f(e_{1},e_{1})\\ \nonumber
 &  & -h_{11}d_{X}f(\nu)+h_{k11}d_{\nu}f(e_{k})-h_{11}^{2}d_{\nu}f(\nu)\\ \nonumber
 &  & +h_{11}d_{\nu,X}^{2}f(e_{1},e_{1})+h_{11}^{2}d_{\nu}^{2}f(e_{1},e_{1}).
\end{eqnarray}

It follows from  (\ref{eq:G2}), (\ref{eq:G3}), (\ref{eq:commute2}), (\ref{eq:max2}) and (\ref{eq:critical})
\begin{eqnarray}
F^{ij}P_{ij} & \geq & \frac{-4\sum_{i}F^{ii}}{\rho}-C\frac{1}{\rho}-\frac{8F^{ii}[(X,e_{i})-(X,E_{n+1})(e_{i},E_{n+1})]^{2}}{\rho^{2}}\\ \nonumber
 &  & +\frac{\sum_{i\neq k}h_{ik1}^{2}-\sum_{i\ne k}h_{ii1}h_{kk1}+F^{ii}h_{11}^{2}h_{ii}-F^{ii}h_{ii}^{2}h_{11}}{h_{11}\log h_{11}}\\ \nonumber
 &  & -(\log h_{11}+1)\frac{F^{ii}h_{11i}^{2}}{h_{11}^{2}\log^{2}h_{11}}+\frac{h_{k11}d_{\nu}f(e_{k})}{h_{11}\log h_{11}}-C\frac{h_{11}}{\log h_{11}}\\ \nonumber
 &  & +2F^{ii}\frac{(e_{i},E_{n+1})h_{ii}}{(\nu,E_{n+1})}(\frac{2\rho_{i}}{\rho}+\frac{h_{11i}}{h_{11}\log h_{11}})-\beta d_{\nu}f(e_{k})[\frac{(X,\nu)}{(\nu,E_{n+1})}]_{k}\\ \nonumber
 &  & -C\beta -C\alpha-\alpha\frac{2h_{kl}d_{\nu}f(e_{l})(e_{k},E_{n+1})}{(\nu,E_{n+1})^{3}}
 +\alpha\frac{2F^{ii}h_{ii}^{2}}{(\nu,E_{n+1})^{2}}\\ \nonumber
 & \geq & \frac{-4\sum_{i}F^{ii}}{\rho}-C(\frac{1}{\rho}+\beta+\alpha)-\frac{8F^{ii}a_{i}^{2}}{\rho^{2}}\\ \nonumber
 &  & +\frac{\sum_{i\neq k}h_{ik1}^{2}-\sum_{i\ne k}h_{ii1}h_{kk1}-F^{ii}h_{ii}^{2}h_{11}}{h_{11}\log h_{11}}-C\frac{h_{11}}{\log h_{11}}+\alpha\frac{2F^{ii}h_{ii}^{2}}{(\nu,E_{n+1})^{2}}\\ \nonumber
 &  & -(\log h_{11}+1)\frac{F^{ii}h_{11i}^{2}}{h_{11}^{2}\log^{2}h_{11}}+2F^{ii}\frac{(e_{i},E_{n+1})h_{ii}}{(\nu,E_{n+1})}(\frac{2\rho_{i}}{\rho}+\frac{h_{11i}}{h_{11}\log h_{11}}).
\end{eqnarray}
where $a_{i}=(X,e_{i})-(X,E_{n+1})(e_{i},E_{n+1})$
and $C$ is a constant depend only on $||f||_{C^{2}},||u||_{C^{1}}$.
In the rest of this article, we will denote $C$ to be constant under
control (depending only on $n$, $\|f\|_{C^{2}}$, $\|\frac{1}{f}\|_{L^{\infty}}$
and $\|u\|_{C^{1}}$), which may change line by line.

By Cauchy-Schwarz inequality
\begin{equation}\label{eq:CS1}
4F^{ii}\frac{|h_{ii}\rho_{i}|}{\rho(\nu,E_{n+1})}\geq-\frac{2F^{ii}h_{ii}^{2}}{(\nu,E_{n+1})^{2}}-\frac{2F^{ii}\rho_{i}^{2}}{\rho^{2}},
\end{equation}
and
\begin{equation}\label{eq:CS2}
2F^{ii}\frac{|h_{11i}h_{ii}|}{(\nu,E_{n+1})h_{11}\log h_{11}}\geq-\frac{F^{ii}h_{ii}^{2}}{(\nu,E_{n+1})^{2}}-\frac{F^{ii}h_{11i}^{2}}{h_{11}^{2}\log^{2}h_{11}}.
\end{equation}

By (\ref{eq:CS1}) and (\ref{eq:CS2}), we have
\begin{eqnarray*}
F^{ij}P_{ij} & \geq & \frac{-4\sum_{i}F^{ii}}{\rho}-C(\frac{1}{\rho}+\beta+\alpha)-\frac{16F^{ii}a_{i}^{2}}{\rho^{2}}\\
 &  & +\frac{\sum_{i\neq j}h_{ij1}^{2}-\sum_{i\ne j}h_{ii1}h_{jj1}-F^{ii}h_{ii}^{2}h_{11}}{h_{11}\log h_{11}}-C\frac{h_{11}}{\log h_{11}}\\
 &  & -(\log h_{11}+2)\frac{F^{ii}h_{11i}^{2}}{h_{11}^{2}\log^{2}h_{11}}+(2\alpha-3)\frac{F^{ii}h_{ii}^{2}}{(\nu,E_{n+1})^{2}}.
\end{eqnarray*}

By Corollary \ref{GQlemW}, for any $j$ from $2$ to $n$,
\begin{equation}\label{eqs:assum}
|h_{jj}|\leq Ch^{-\frac12}_{11}.
\end{equation}
 As $h_{11}$ is sufficiently large at $x_{0}$, it
follows from Corollary \ref{GQlemW} that
\begin{equation}\label{eq:concave}
\frac{\sum_{i\neq j}h_{ij1}^{2}-\sum_{i\ne j}h_{ii1}h_{jj1}}{h_{11}\log h_{11}}-(\log h_{11}+2)\frac{F^{ii}h_{11i}^2}{h_{11}^2\log^2 h_{11}}\geq\frac{F^{ii}h_{11i}^{2}}{10h_{11}^{2}\log h_{11}}-C\frac{h_{11}}{\log h_{11}}.
\end{equation}

From (\ref{eq:concave}),
\begin{eqnarray*}
F^{ij}P_{ij} & \geq & \frac{F^{ii}h_{11i}^{2}}{20h_{11}^{2}\log h_{11}}-\frac{4\sum_{i}F^{ii}}{\rho}-C(\frac{1}{\rho}+\beta+\alpha)-\frac{16F^{ii}a_{i}^{2}}{\rho^{2}}\\
 &  & +(2\alpha-4)\frac{F^{ii}h_{ii}^{2}}{(\nu,E_{n+1})^{2}}.
\end{eqnarray*}

If we choose $h_{11}\geq C(\frac{1}{\rho}+\beta+\alpha)$ and $\alpha>4$,
\begin{equation}
F^{ij}P_{ij}\geq\frac{F^{ii}h_{11i}^{2}}{20h_{11}^{2}\log h_{11}}-\frac{4\sum_{i}F^{ii}}{\rho}-\frac{16F^{ii}a_{i}^{2}}{\rho^{2}}+\alpha\frac{F^{ii}h_{ii}^{2}}{(\nu,E_{n+1})^{2}}.
\end{equation}

So far all computations above are by the local frame on
hypersurface, we now switch to orthonormal coordinate $\{E_{1},E_{2},\cdots,E_{n},E_{n+1}\}$
in $\mathbb{R}^{n+1}$, such that $E_{i}\perp E_{n+1}$.
In this new coordinates, we can decompose vector $X$ as follow
\begin{equation}
X=\sum_{i=1}^{n}(X,E_{i})E_{i}+(X,E_{n+1})E_{n+1}.
\end{equation}
Thus
\begin{equation}
\rho=1-\sum_{i=1}^{n}(X,E_{i})^{2}.
\end{equation}

Recall that
\begin{equation}\label{eq:aj}
a_{j}=(X,e_{i})-(X,E_{n+1})(e_{i},E_{n+1})=\sum_{i}(X,E_{i})(E_{i},e_{j}).
\end{equation}

So
\begin{eqnarray*}
\sum_{i=1}^{n}a_{i}^{2} & = & \sum_{i,k,l=1}^{n}(X,E_{k})(X,E_{l})(E_{k},e_{i})(E_{l},e_{i})\\
 & = & \sum_{k,l=1}^{n}(X,E_{k})(X,E_{l})(\delta_{kl}-(E_{k},\nu)(E_{l},\nu)).
\end{eqnarray*}
Because $(\nu,E_{n+1})$ is bounded from above and below,
we have
\begin{equation}\label{eq:equvalent}
\frac{1}{C}\sum_{i=1}^{n}(X,E_{i})^{2}\leq\sum_{i=1}^{n}a_{i}^{2}\leq C\sum_{i=1}^{n}(X,E_{i})^{2}.
\end{equation}

We divided it into three cases.

\medskip{}

\noindent \textit{Case 1}: $\sum_{i}|(X,E_{i})|^{2}\leq\frac{1}{2}$.

In this case, $\frac{1}{\rho}\leq2$. If $\alpha$ is chosen sufficiently large,
we get the estimate.

Therefore,  we may assume that $\sum_{i}|(X,E_{i})|^{2}\geq\frac{1}{2}$,
which is equivalent to $\sum_{i=1}^{n}a_{i}^{2}\geq\frac{1}{2C}$.

\noindent \textit{Case 2}: for some $n\geq j>1$, $|a_{j}|>d$, where
$d$ is a small positive constant to be determined. From (\ref{eq:critical}),
\begin{eqnarray}\label{AJ}
\frac{4a_{j}}{\rho} & = & \frac{h_{11j}}{h_{11}\log h_{11}}-\beta[\frac{(X,\nu)}{(\nu,E_{n+1})}]_{j}-\alpha\frac{2(\nu_{j},E_{n+1})}{(\nu,E_{n+1})^{3}}\\ \nonumber
 & = & \frac{h_{11j}}{h_{11}\log h_{11}}-\beta\frac{b_{j}h_{jj}}{(\nu,E_{n+1})^{2}}-\alpha\frac{2h_{jj}(e_{j},E_{n+1})}{(\nu,E_{n+1})^{3}},
\end{eqnarray}
where $b_{j}=(X,e_{j})(\nu,E_{n+1})-(e_{j},E_{n+1})(X,\nu)$.

By the assumption and Corollary \ref{GQlemW}, $h_{ii}, i=2,\cdots,n$ are small, as we may assume $a(\beta+\alpha)\leq\epsilon d$, where $a^2=\frac{A}{\sigma_1(h)}$. It follows from (\ref{AJ}),
\begin{equation}
\frac{h_{11j}^{2}}{h_{11}^{2}\log^{2}h_{11}}\geq\frac{4a_{j}^{2}}{\rho^{2}}\geq\frac{4d^{2}}{\rho^{2}}.
\end{equation}
In turn,
\begin{eqnarray}
F^{ij}P_{ij} & \geq & \frac{F^{jj}d^{2}\log h_{11}}{5\rho^{2}}-\frac{4\sum_{i}F^{ii}}{\rho}-\frac{16F^{ii}a_{i}^{2}}{\rho^{2}}.
\end{eqnarray}

By Lemma \ref{lemma1}
\begin{equation}
F^{jj}\geq c\sum_{i}F^{ii},
\end{equation}
the estimate follows in this case.

\noindent \textit{Case 3}: $|x_{0}|^{2}\geq\frac{1}{2}$ and $\sum_{i\neq1}|a_{i}|^{2}\leq(n-1)d^{2}$
.

\noindent In this case we have from (\ref{eq:equvalent}) that
\begin{equation}
|a_{1}|\geq\frac{1}{4C}.
\end{equation}

\noindent At the critical point
\begin{equation}
\frac{h_{111}}{h_{11}\log h_{11}}=\frac{4a_{1}}{\rho}+\beta\frac{b_{1}h_{11}}{(\nu,E_{n+1})^{2}}+\alpha\frac{2h_{11}(e_{1},E_{n+1})}{(\nu,E_{n+1})^{3}},
\end{equation}
where $b_{1}=(X,e_{1})(\nu,E_{n+1})-(E_{n+1},e_{1})(X,\nu)$.

We claim that $|b_{1}|$ have a positive lower bound, say $|b_{1}|\geq c_{0}>0$.
In fact, we can decompose vector $\nu$ as follow
\begin{equation}
\nu=\sum_{i}(E_{i},\nu)E_{i}+(E_{n+1},\nu)E_{n+1}.
\end{equation}
and
\begin{eqnarray*}
b_{1} & = & \sum_{i}(X,E_{i})(E_{i},e_{1})(\nu,E_{n+1})-(E_{n+1},e_{1})(X,E_{i})(E_{i},\nu)\\
 &  & +(E_{n+1},e_{1})[(X,E_{n+1})(\nu,E_{n+1})-(X,E_{n+1})(E_{n+1},\nu)]\\
 & = & a_{1}(\nu,E_{n+1})-(E_{n+1},e_{1})(X,E_{i})(E_{i},\nu).
\end{eqnarray*}

Note that
\begin{equation}
e_{1}-\sum_{j}(e_{1},E_{j})E_{j}=(e_{1},E_{n+1})E_{n+1},\label{eq:e3}
\end{equation}
and
\begin{equation}\label{eq:epsi}
E_{i}-\sum_{j}(e_{j},E_{i})e_{j}=(E_{i},\nu)\nu.
\end{equation}

Take inner product of (\ref{eq:epsi}) with (\ref{eq:e3}),
\begin{equation}\label{eq:epsi2}
\sum_{j,k}(e_{1},E_{j})(E_{j},e_{k})(e_{k},E_{i})-(e_{1},E_{i})=(E_{n+1},e_{1})(E_{i},\nu)(E_{n+1},\nu)
\end{equation}
Recall that
\begin{equation}\label{eq:a1}
a_{k}=\sum_{k}(X,E_{i})(E_{i},e_{k}).
\end{equation}
Then by (\ref{eq:a1}) and (\ref{eq:epsi2}), $b_{1}$ becomes
\begin{eqnarray*}
b_{1} & = & a_{1}(\nu,E_{n+1})+\frac{-\sum_{k,j}a_{k}(e_{1},E_{j})(E_{j},e_{k})+a_{1}}{(E_{n+1},\nu)}\\
 & = & a_{1}(\nu,E_{n+1})+a_{1}\frac{1-\sum_{j}(e_{1},E_{j})^{2}}{(E_{n+1},\nu)}
 -\sum_{k\neq1}\sum_{j}\frac{a_{k}(e_{1},E_{j})(E_{j},e_{k})}{(E_{n+1},\nu)}.
\end{eqnarray*}

By our assumption in this case,
\begin{eqnarray*}
|b_{1}| & \geq & |a_{1}(\nu,E_{n+1})|-\sum_{k\neq1}\frac{|a_{k}|}{|(E_{n+1},\nu)|}
\end{eqnarray*}
We may choose $d$ small. Then there is positive lower bound for $|b_{1}|$
, such that
\begin{equation}
|b_{1}|\geq\frac{1}{2}|a_{1}||(\nu,E_{n+1})|.
\end{equation}

If we choose $\beta\geq\alpha$ large,  we get
\begin{eqnarray*}
\frac{|h_{111}|}{h_{11}\log h_{11}} & \geq & -|\frac{4a_{1}}{\rho}|+\beta|\frac{a_{1}h_{11}}{4(\nu,E_{n+1})}|\\
 & \geq & \beta\frac{|a_{1}|h_{11}}{8|(\nu,E_{n+1})|}.
\end{eqnarray*}

Thus,
\begin{equation}
F^{ij}P_{ij}\geq\frac{a_{1}^{2}\beta^{2}F^{11}h_{11}^{2}\log h_{11}}{20(\nu,E_{n+1})^{2}}-\frac{4\sum_{i}F^{ii}}{\rho}-\frac{16F^{ii}a_{i}^{2}}{\rho^{2}}.
\end{equation}
It follows from Lemma \ref{lemma1},
\begin{equation}
F^{11}h_{11}^{2}\geq C\sum_{i}F^{ii},
\end{equation}
for some dimensional constant $C>0$.
This implies the interior curvature estimate. The proof of Theorem \ref{thm1s} is complete. \qed

\bigskip

The proof of Theorem \ref{thm01} follows the same lines of proof Theorem \ref{thm1s} as the isometrically embedded hypersurface obeys the same curvature equation (\ref{seq:maineq}). We may use Corollary \ref{GQlemR} in place of Corollary \ref{GQlemW} in the proof of Theorem \ref{thm1s}. The control of Ricci curvature by intrinsic metric implies condition $\sigma_3(\kappa)$ bounded from below as in Corollary \ref{GQlemR}.

\end{document}